\newcommand{\bbE}{\mathbb{E}}
\newcommand{\argmax}{\textrm{argmax}}
\newcommand{\Om}{\Omega}
\newtheorem{thm}{Theorem}
\newtheorem{lem}[thm]{Lemma}
\newtheorem{cor}[thm]{Corollary}
\renewcommand{\algorithmicensure}{\textbf{output:}}
\newcommand{\MAIN}[1]{ \renewcommand{\algorithmicensure}{\textbf{main:}} \ENSURE #1 \renewcommand{\algorithmicensure}{\textbf{output:}}}
\title{Optimal State-Space Reduction for Exact Calculation on Pedigree Hidden Markov Models\thanks{This work was partially supported by NSF grants OISE-0730136 and DMS-1106770.}}
\author{B. Kirkpatrick\footnotemark[2], 
	K. Kirkpatrick\footnotemark[3]}
\begin{document}         

\maketitle
\footnotetext[2]{Computer Science, University of British Columbia ({\tt bbkirk@cs.ubc.ca})} 
\footnotetext[3]{Mathematics, University of Illinois at Urbana-Champaign ({\tt kkirkpat@illinois.edu}) }

\begin{abstract}
To analyze whole-genome genetic data inherited in families, the likelihood is typically obtained from a Hidden Markov Model (HMM) having a state space of $2^n$ hidden states where $n$ is the number of meioses or edges in the pedigree.  There have been several attempts to speed up this calculation by reducing the state-space of the HMM.  One of these methods has been automated in a calculation that is more efficient than the na\"{i}ve HMM calculation; however, that method treats a special case and the efficiency gain is available for only those rare pedigrees containing long chains of single-child lineages. The other existing state-space reduction method treats the general case, but the existing algorithm has super-exponential running time.

We present three formulations of the state-space reduction problem, two dealing with groups and one with partitions. One of these problems, the maximum isometry group problem was discussed in detail by Browning and Browning~\cite{Browning2002}. We show that for pedigrees, all three of these problems have identical solutions. Furthermore, we are able to prove the uniqueness of the solution using the algorithm that we introduce. This algorithm leverages the insight provided by the equivalence between the partition and group formulations of the problem to quickly find the optimal state-space reduction for general pedigrees.

We propose a new likelihood calculation which is a two-stage process: find the optimal state-space, then run the HMM forward-backward algorithm on the optimal state-space. In comparison with the one-stage HMM calculation, this new method more quickly calculates the exact pedigree likelihood.

\end{abstract}



\section{Introduction}

\paragraph{Motivation}
Statistical calculations on pedigrees are the principal method behind
the most accurate disease-association
approaches~\cite{Risch1996,Thornton2007}.  In those approaches, the
aim is to find the regions of the genome that are associated with the
presence or absence of a disease among related individuals.
Furthermore, pedigree likelihoods are used to estimate fine-scale
recombination rates in humans~\cite{Coop2008}, where there are few
other approaches for making these estimates.  There exist many implementations of exact likelihood calculations for pedigrees~\cite{Fishelson2005,Abecasis2002,Sobel1996}.  Computation of
probabilities on pedigrees are of great interest to computer
scientists because they give an important example of graphical models
which model probability distributions by using a graph whose edges are
conditional probability events and whose nodes are random
variables~\cite{Lauritzen2003}.  Methods for reducing the state-space
of a pedigree graphical model could generalize to other graphical
models, as noted also by Geiger et al~\cite{Geiger2009}.

\paragraph{The Problem Summary}
Hidden Markov Models (HMMs) analyzing the genotypes of related
individuals have running time $O(m2^{2n})$ where $m$ is the number of
sites and $n$ is the number of meioses in the pedigree.  Therefore, it
is desirable to find more efficient algorithms. Any partitioning of
the state space into $k$ ensemble states (i.e., states with identical
emission probabilities and Markovian transition probabilities) will
improve the running time of an HMM to $O(mk^2)$, even if the ensembles
are not optimal.  Since the HMMs have an exponential state space and a
running time polynomial in the size of the state space, even an
exponential algorithm for finding ensemble states can improve the
running time of the HMM calculations.

\paragraph{Literature Review}
Donnelly~\cite{Donnelly1983} introduced the idea of finding ensemble
states for the IBD Markov model, and used a manual method for finding
the symmetries for several examples of two-person pedigrees. Browning
and Browning~\cite{Browning2002} formalized the requirements for
symmetries that describe ensemble states in a new HMM.  They gave the
first algorithm for finding the maximal set of isometries that
preserves the Markov property and the IBD information. Their algorithm
which is based on enumerating permutations appears to have worst-case
running time of at least of $O(n!2^{2n})$, where $n$ is the number of
meioses in the pedigree.  However, the running time of their algorithm
is difficult to analyze due to their three case-specific improvements. They also left open the question of whether
groups other than isometry groups could give useful state-space
reductions~\cite{Browning2002}.  
Browning and Browning found the maximal \emph{group of isometries} satisfying the constraints, however, they did not
draw any conclusions about whether their method finds the \emph{group} with
the maximal orbit sizes.

McPeek~\cite{McPeek2002inference} presented a detailed formulation of
the condensed identity states and an algorithm.  Most recently Geiger
et al~\cite{Geiger2009} discussed a similar problem using the language
of partitions.  They found isometries of a limited type in $O(n^2)$.  
They gave a
special-case state-space reduction involving only partitions that
collapse simple lineages (multiple generations with a single child per
generation and with the non-lineage parents being founders).
Several other people have introduced algorithms for finding symmetries for
systems applications~\cite{Lorentson2001,Junttila2004}.

Kirkpatrick~\cite{Kirkpatrick2011xxxx} used a method of finding the state space which is the maximal group of isometries (i.e.~such the method in Browning and Browning~\cite{Browning2002} or in this paper) to determine whether two pedigrees are non-identifiable, meaning that under any fixed data the two pedigrees have the same probability of generating the data.  This is important in the context of pedigree reconstruction where the problem is to infer a pedigree graph only from genetic data.  The reconstruction algorithm is typically viewed as a maximum-likelihood search over pedigree graphs where each pedigree is scored using the likelihood.  Non-identifiability, which is computed using a method such as the one in this paper, says that the correct pedigree graph cannot be inferred with high probability because of ties in the likelihood score.

\paragraph{Our Contribution}
Inspired by the work of Browning and Browning~\cite{Browning2002}, we
look for maximal ensembles of the hidden states that can be used to
create a new HMM with a much more efficient running-time. We
introduce an improved algorithm for finding the maximal ensemble
states that preserve both the Markov property 
and the identity by descent (IBD) information of the individuals of
interest. 

We introduce an $O(n2^{2n})$ maximal-ensemble algorithm for finding a
permutation group on the $2^n$ vertices of the hypercube, and for
producing the most efficient ensemble states (i.e. the smallest
partition of the state-space that respects the IBD and Markov
properties and has the maximal partition sets and minimal number of
sets in the partition).  We prove that the optimal partition is a
solution to the maximal isometry group problem that Browning and
Browning introduced, thereby relating the work of Geiger, et al to
that of Browning and Browning.  Both Browning and Browning's algorithm
and ours finds the optimal partition of the state space which can be
described using a group of isometries having a maximal number of
elements.  However, our algorithm is much faster, having a coefficient
$n$ instead of $n!$.

We also introduce a bootstrap version of the maximal-ensemble
algorithm which takes advantage of the isometries introduced by
Geiger, et al.~\cite{Geiger2009} and the well-known founder isometry.
By enumerating one representative from each set of the partition
induced by the known isometries, we can create a bootstrap
maximal-ensemble algorithm that runs in $O(nk2^{n})$ time where $n$ is
the number of meioses in the pedigree, and $k$ is the number of
partitions from the known isometries.

\section{Problem Description}
Consider a pedigree graph, $P$, having individuals $V$ as nodes and
having $n$ meioses with each meiosis being a directed edge from parent
to child.  Let $I$ be the set of individuals of interest, 
because we have data for those individuals.  While it might be
algorithmically convenient to assume that $I = V$, it is impractical.
Many of the ancestral individuals in the pedigree are likely deceased,
and genetic samples are unavailable.

An \emph{inheritance
state or vector} is a binary vector $x$ with $n$ bits where each bit
indicates which grand-parental allele, paternal or maternal, was
copied for that meiosis.  
The equivalent
\emph{inheritance graph}, $R_x$, has two nodes per individual (one for
each allele) and edges from inherited parental alleles to their
corresponding child alleles.  
Individuals of interest are called
\emph{identical by descent (IBD)} if a particular founder allele was
copied to each of the individuals.  In general, the inheritance graph
is a collection of trees, since each allele is copied from a single parent.

The set of all inheritance states (binary $n$-vectors) is the $n$-dimensional 
hypercube $\mathcal{H}_{n}$, with $2^n$ vertices.  The inheritance
process is modelled as a symmetric random walk on $\mathcal{H}_{n}$, with
the time dimension of the walk being the distance along the genome.  At
equilibrium, the walk has uniform probability of being at any of the
hypercube vertices.  From vertex $x$ in $\mathcal{H}_n$, a step is
taken to a neighboring vertex after an exponential waiting time with
parameter $\lambda=n$.
 For each individual zygote, with one meiosis,
this is a Poisson process with parameter $\lambda=1$ and genome length
roughly $30$.

There is a discrete version of this random walk, which is often
used for hidden Markov models (HMMs) that compute the probability of
observing the given data by taking an expectation over the possible
random walks on the hypercube.  Let $X$ be a Markov chain, $\{X_t:
t=1,2,...,m\}$ for $m$ loci with a state space $\mathcal{H}_{n}$
consisting of all the inheritance states of the pedigree.  The
recombination rate, $\theta_t$, is the probability of recombination
per meiosis, between a neighboring pair of loci, $t$ and $t+1$.  If
$t$ and $t+1$ are separated by distance $d$, then the Poisson process
tells us that the probability of an odd number of recombinations is
$\theta_t = 1/2(1-e^{-2\lambda d})$.  The natural distance on $\mathcal{H}_{n}$ is the Hamming distance, $|x \oplus y|$, for two states $x$ and $y$, where $\oplus$ is the XOR operation and $|\cdot|$ is the $L^1$-norm in $\mathbb{R}^n$.
Then the probability of transitioning from $x$ to $y$ is
\[Pr[X_{t+1}=y|X_t=x] = \theta_t^{|x \oplus y|}(1-\theta_t)^{n-|x \oplus y|}. \]
Figure~\ref{fig:halfsibs} shows an example HMM with three genomic sites.  The states of the HMM are shown in circles on the right.

\begin{figure}[ht]
  \begin{center}
    \includegraphics[scale=0.6]{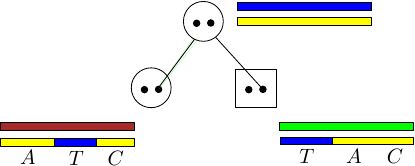}
    \hspace{1cm}
    \includegraphics[scale=0.6]{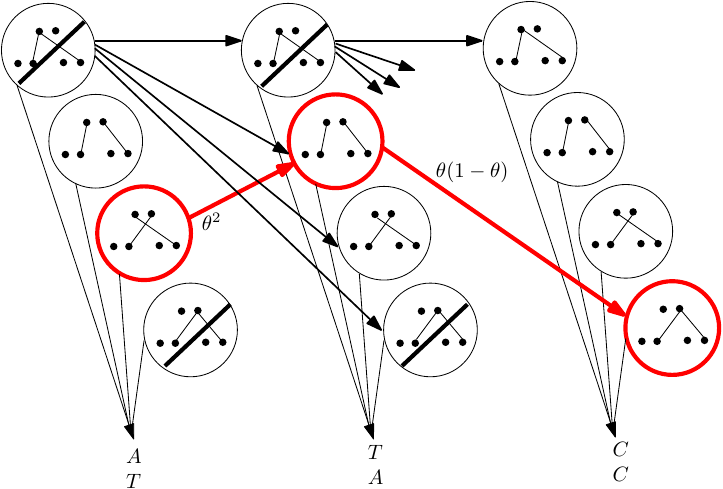}
  \end{center}
  \caption{{\bf Two Half-Siblings.}   \label{fig:halfsibs}
(Left Panel) A pedigree with two non-founders of which two are
half-siblings together with their common parent.  Circles and boxes
represent female and male individuals, respectively, while the two black dots for
each person represent their two chromosomes or alleles.  Edges are
implicitly directed downward from parent to child.  The alleles of
each individual are ordered, so that the left allele, or paternal
allele, is inherited from the person's father, while the right,
maternal allele is inherited from the mother.  The two siblings are
the only labeled individuals.  Their genomes are shown in color so
that the same color indicates inheritance from the same ancestor.  For
convenience, the genotype of each person is homozygous.
(Right Panel) The HMM for the genotypes from the left panel.  At each
site in the genome, the possibles states are the vectors in ${\cal
H}_n$.  In each circle an inheritance state is drawn as an inheritance
graph and the inheritance states for a single site are arranged in a
column.  The allowed transitions between neighboring sites are a
complete bipartite graph (due to space, only a fraction of the edges
are drawn).  The nodes with a slash through them are inheritance
states that are not allowed by the data.  The red nodes and edges are
the path for the actual inheritance states indicated by the yellow and
blue in the left panel.  However, this is only one of several paths of
inheritance states that are consistent with the data.  
}
\end{figure}

We define potential ensembles of states as being the orbits of a group
of symmetries.  Let $G$ be a group that acts on the state space
$\mathcal{H}_{n}$ of $X$.  A symmetry is a bijection $\psi \in G$
where $\psi$ is a permutation on $2^n$ elements, the vertices of
$\mathcal{H}_{n}$.  An \emph{orbit} of $G$ acting on
$\mathcal{H}_{n}$ is the set
\[\omega(y) = \{x | x = \psi(y) ~\textrm{and}~ \psi \in G\},\]
and we write the set of all orbits of $G$ as $\Om(G) = \{\omega(y): y \in \mathcal{H}_{n}\}$.

Conventional algorithms for computing likelihoods of data have an
exponential running time, because the state space of the HMM is
exponential in the number of meioses in the pedigree. We propose new
ways to collapse hypercube vertices into ensemble states for a new HMM
that has a more efficient running time.  In particular we are
interested in optimal ensemble states that preserve certain
relationship structures: the Markovianness of the random walk and
the emission probabilities.  We will first discuss the Markov property
and then discuss the constraints on ensemble states that the emission
probabilities provide.

\subsection{Markov Property}

Let $\{X_t\}$ be a stationary, reversible Markov chain with state
space $\Omega$, such as the chain corresponding to the hidden states
of the pedigree HMM.

Let $Y$ be a new processes, $\{Y_t: t=1,2,...,m\}$ having
states $\Omega(G) = \{\omega_1,...,\omega_k\}$ which are the orbits of some group $G$.  This new
Markov chain is coupled to the original such that when $X_t=x \in
\omega \in \Omega(G)$, $Y_t = \omega$, and $Y_t$ is a projection of $X_t$ 
into a smaller state space.  Define the transition probabilities for
process $Y_t$ as
\begin{eqnarray}
\label{eqn:transitions}
Pr[Y_{t+1} = \omega_j | Y_t= \omega_i] = \sum_{y \in \omega_j} Pr[X_{t+1}=y|X_t=x] 
\end{eqnarray}
for $x \in \omega_i$, for $\omega_i,\omega_j \in \Omega(G)$.
We will call $Y_t$ the expectation chain since 
\begin{eqnarray*} 
Pr[Y_{t+1}=\omega_j | Y_t=\omega_i] &=& \bbE[E_j | X_t=x],
\end{eqnarray*}
where $E_j$ is the event that $X_{t+1} \in \omega_j$.

Since $X_t$ is stationary and reversible, the necessary and sufficient
condition~\cite{Burke1958} for $Y_t$ to also be Markov is that
\begin{equation}
\label{eqn:markov}
\sum_{y \in \omega_j} Pr[X_{t+1}=y|X_t=x_1] = \sum_{y \in \omega_j}  Pr[X_{t+1}=y|X_t=x_2]
\end{equation}
for all $x_1,x_2 \in \omega_i$ for all $i$, and for all $\omega_j$.
Therefore any group whose orbits satisfy this set of equations can be
used to create a new Markov chain $Y_t$.

From Equations~(\ref{eqn:transitions}) and~(\ref{eqn:markov}), we see
that the stationary distribution of Markov chain $Y_t$ is $Pr[Y_t =
\omega_i] = \sum_{y \in \omega_i} \pi_y$ where $\pi_y$ is the stationary
distribution of $X_t$.  For pedigree HMMs, the stationary distribution
of $X_t$ is uniform, $\pi_y = 1/2^n$, therefore the expectation chain
for some group that satisfies Equation~(\ref{eqn:markov}) will have a
stationary distribution $Pr[Y_t = \omega_i] = |\omega_i| / 2^n$.

For pedigree Markov chains, Equation~(\ref{eqn:markov}) becomes, for $s = \theta/(1-\theta)$ and $0 < \theta < 0.5$,
\begin{equation}
\label{eqn:markovped}
\sum_{y \in \omega_j}  s^{|y \oplus x_1|} = \sum_{y \in \omega_j} s^{|y \oplus x_2|} ~~~\forall x_1,x_2 \in \omega_i.
\end{equation} 
If the expectation chain $Y_t$ corresponding to pedigree Markov chain $X_t$
satisfies this equation, we say that it satisfies the \emph{Markov
property}.  Notice that these polynomials are identical if and only if
the coefficients of like powers are equal.

Browning and Browning~\cite{Browning2002} gave an algorithm that
searches for a maximal group of isometries where the group was maximal
in the number of group elements.  A group, $G$, of \emph{isometries} has orbits
$\Omega(G) = \{\omega_1,...,\omega_k\}$ such that $|T(x) \oplus T(y)| = |x \oplus y|$ for all $T \in G$,
$y \in \omega_j$ and $x \in \omega_i$ for all $i,j$.  We will refer to isometries using $T$ and will reserve $\psi$ for general symmetries.

This means that the
transition probabilities are related by
\begin{equation}
Pr[X_{t+1}=y | X_t = x] = Pr[X_{t+1}=T(y) | X_t=T(x)].
\end{equation}
Browning and Browning left open the question of whether any symmetry
groups satisfying Equation~(\ref{eqn:markovped}) must be equivalent to
a group of isometries (meaning that it has the same orbits).  We
answer this question. Theorem~\ref{thm:isometry} proves that for any
group of permutations satisfying Equation~\ref{eqn:markovped}, there
is always a group of isometries having the same orbits as the group of
permutations.

\begin{thm}
\label{thm:isometry}
Let $S$ be a group of permutations of $\mathcal{H}_{n}$ whose orbits $\Om(S)$ satisfy Equation~(\ref{eqn:markovped}). Then there exists a  group of isometries $G$ having the same orbits as $S$: that is, for every $T \in G$ and all $x,y \in \mathcal{H}_{n}$, $|y \oplus x| = |T(y) \oplus T(x)|$, and the set of orbits $\Om(G)$ is equal to $\Om(S)$.
\end{thm}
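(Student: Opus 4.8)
The plan is to convert the polynomial identity into a combinatorial regularity condition, to recall the explicit form of the isometries of $\mathcal{H}_n$, and then to take $G$ to be the full group of isometries respecting the partition $\Omega(S)$ — the only real work being to show that this $G$ is already transitive on each orbit of $S$. First I would reinterpret Equation~(\ref{eqn:markovped}): two polynomials in $s$ that agree on an interval agree coefficient by coefficient, so the hypothesis is equivalent to requiring, for all orbits $\omega_i,\omega_j$, all $x_1,x_2\in\omega_i$, and all $k\ge0$, that $\#\{y\in\omega_j:|y\oplus x_1|=k\}=\#\{y\in\omega_j:|y\oplus x_2|=k\}$; that is, the Hamming-distance distribution from a point of $\omega_i$ into $\omega_j$ depends only on $i$ and $j$. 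The case $k=1$ says exactly that $\Omega(S)$ is an equitable partition of the hypercube graph, and since $\mathcal{H}_n$ is distance-regular (its distance-$k$ adjacency matrix is a fixed polynomial in its ordinary adjacency matrix) the $k=1$ case already forces the statement for all $k$. I would also note that conjugating $S$ by a translation $x\mapsto x\oplus v$ preserves both the hypothesis and the conclusion, so a chosen orbit may always be assumed to contain $\mathbf 0$.

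Next I would use the standard fact that $\mathrm{Isom}(\mathcal{H}_n)=(\mathbb{Z}_2)^n\rtimes S_n$: every distance-preserving bijection is a coordinate permutation $\sigma\in S_n$ followed by a translation $t_v:x\mapsto x\oplus v$ (compose with a translation to fix $\mathbf 0$; the result preserves Hamming weight, hence permutes the weight-one vectors, which produces $\sigma$; then induction on weight shows the remaining map is the identity). Let $G$ be the set of all isometries $T$ of $\mathcal{H}_n$ that carry every orbit of $S$ onto an orbit of $S$. This set is closed under composition and inversion, hence is a group, and by construction it preserves the partition $\Omega(S)$, so $\Omega(G)$ refines $\Omega(S)$. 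It therefore suffices to prove that $G$ acts transitively on each orbit of $S$: then $\Omega(G)=\Omega(S)$, and every element of $G$ is an isometry, which is precisely the assertion.

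The heart of the argument is thus: given $x,y$ in a common orbit $\omega$ of $S$, construct an isometry $T$ with $T(x)=y$ that sends orbits to orbits. Translating so that $x=\mathbf 0\in\omega$, I need an isometry of the form $t_y\circ\sigma$ with $\sigma\in S_n$ that maps $\Omega(S)$ into itself, knowing only that $\mathbf 0$ and $y$ have equal distance profiles to every orbit. I would attempt this by induction on $n$: split $\mathcal{H}_n$ into the facets $\{x_n=0\}$ and $\{x_n=1\}$, verify that equitability of $\Omega(S)$ on $\mathcal{H}_n$ restricts to compatible regularity data on the two $(n-1)$-subcubes and on the perfect matching joining them, apply the inductive hypothesis on the facets, and glue the two facet-isometries into a single isometry of $\mathcal{H}_n$ — the counting identities being exactly what allows the two pieces to be chosen so that they agree across the matching and still permute the orbits of $S$. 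Once transitivity on each orbit is in hand, $\Omega(G)=\Omega(S)$ is immediate.

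The main obstacle is this final gluing step: ``equal distance profile to every part'' is merely numerical information, and in a general graph two vertices with matching local refinement data need not lie in a common automorphism orbit, so the proof must genuinely exploit the rigidity of the hypercube together with the full force of equitability of $\Omega(S)$ — not just that $\mathbf 0$ and $y$ look alike locally. Controlling which coordinates may be permuted while each orbit of $S$ stays setwise fixed, and doing so coherently across all orbits at once rather than one pair of points at a time, is where the real difficulty lies; this is precisely the equivalence between the partition and the group descriptions that the algorithm of the following section turns into an efficient procedure.
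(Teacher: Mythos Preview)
Your approach differs substantially from the paper's. The paper does not take $G$ to be the full isometry stabilizer of the partition and then argue transitivity; instead, for each pair $x_1,x_2$ lying in a common orbit $\omega$, it writes down the single explicit isometry $\pi_{x_1,x_2}\colon y\mapsto y\oplus(x_1\oplus x_2)$ (a pure translation---no coordinate permutation at all) and lets $G$ be generated by all such maps. This sidesteps your induction-and-gluing entirely: each generator is globally defined in one line and is trivially distance-preserving, so the only thing left to check is that every $\pi_{x_1,x_2}$ carries each orbit of $S$ to itself.

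Your proposal has a genuine gap, and you locate it accurately: proving that the isometry stabilizer of $\Omega(S)$ is transitive on each block is the entire content of the theorem, and your inductive splitting of $\mathcal{H}_n$ into two facets does not obviously respect an arbitrary equitable partition, so the ``gluing'' is not routine. It is worth noting, though, that the paper's argument has the \emph{same} gap in disguise. It asserts that ``no pair of points from different orbits will appear as a cycle'' of any $\pi_{x_1,x_2}$, but this is neither proved nor true: take $n=3$ with $\Omega(S)$ the partition of $\mathcal{H}_3$ into Hamming-weight classes (which satisfies Equation~(\ref{eqn:markovped})); choosing $x_1=001$, $x_2=010$ gives $\pi_{x_1,x_2}=\phi_{011}$, which sends $000$ (weight $0$) to $011$ (weight $2$). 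The correct isometry group here is $S_3$ acting by coordinate permutations, and no group of pure translations has the weight classes as its orbits. So your instinct that matching distance profiles is ``merely numerical information'' and need not by itself produce a concrete orbit-preserving isometry is exactly right; the paper's explicit construction does not actually escape this difficulty.
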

\begin{proof}
We prove this by constructing a generating set $A$ for $G$.  
First, let the identity permutation $\pi_e$ be in $A$.  
Then for each orbit $\omega$ of $S$, and each pair of points $x_1$ and $x_2$ in
$\omega$, we will construct a permutation $\pi_{x_1,x_2}$ to 
add to the generating set $A$.  If $x_1=x_2$, then $\pi_{x_1,x_2} =
\pi_e$ which is already in $A$.  If $x_1 \ne x_2$ then $\pi_{x_1,x_2}$ 
will be a composition of disjoint two-cycles, in particular including the cycle $(x_1~ x_2)$.  
Our generating set
$A$ will then be the union of all these permutations, so by construction it 
will generate a group $G = \langle A \rangle$ having the same orbits as $S$.

For fixed $x_1,x_2\in \omega$, the two-cycles comprising $\pi_{x_1,x_2}$ are constructed as follows: 

For each $k = 1, \dots, n$, define $a_k := \# \{y \in \omega: |y \oplus x_1|=k \}$ and $b_k := \# \{z \in \omega: |z \oplus x_2|=k \}$, which implies by Equation~(\ref{eqn:markovped}) that $a_k s^k = b_k s^k$ for each $k$, and hence $a_k = b_k$,  since $s > 0$ and polynomials in $s$ are uniquely determined by their coefficients and powers.  Then, for each given $y_1 \in \omega$ that is distinct from both $x_1$ and $x_2$, there exists $z_1$ such that $|y_1 \oplus x_1| = |z_1 \oplus x_2| =k$, because $a_k \ge 1$, a consequence of the fact that $y_1 \in A_k := \{y \in \omega: |y \oplus x_1|=k \}$. In other words, $z_1:= y_1 \oplus (x_1 \oplus x_2)$, and the cycle is $c_1:=(y_1~z_1)$. 

Proceed similarly for $y_2 \in {\cal H}_n \setminus \{x_1, x_2, y_1, z_1\}$, defining $z_2:= y_2 \oplus (x_1 \oplus x_2)$, and the cycle $c_2:=(y_2~z_2)$, and so on for each $y_i \in {\cal H}_n \setminus \{x_1, x_2, y_1, z_1, \dots, y_{i-1}, z_{i-1}\}$, with $z_i:= y_i \oplus (x_1 \oplus x_2)$ and $c_i:=(y_i~z_i)$. Then we define the permutation $\pi_{x_1,x_2} := c_1 \circ c_2 \circ ... \circ c_{2^{n}}$. In particular it has the cycle $(x_1~ x_2)$ in its composition, since when $y=x_1$, we have $z=x_2$. Notice also that the definitions of $z_i$ imply that 
\begin{align}
y_i \oplus y_j =  y_i \oplus y_1 \oplus y_j  \oplus y_1 =z_i \oplus z_1 \oplus z_j  \oplus z_1 = z_i \oplus z_j; \\
y_i \oplus z_j =  y_i \oplus x_1 \oplus z_j  \oplus x_1 =z_i \oplus x_2 \oplus y_j  \oplus x_2 = z_i \oplus y_j.
\end{align}
Hence by taking $L^1$ norms, the permutation $\pi_{x_1,x_2}$ is an isometry with respect to Hamming distance.



Furthermore, the group $G = \langle A \rangle$ will have the same
orbits as $S$, since for each orbit $\omega$ and each pair
$x_1,x_2\in \omega$, the cycle $(x_1~ x_2)$ will appear in some permutation, and no pair
of points from different orbits will appear as a cycle in any permutation.
\end{proof}

This proof complements the result from Browning and Browning regarding the fact that isometry groups always satisfy Equation~\ref{eqn:markovped}.  Indeed, we will state the complete result as a corollary.

\begin{cor} A group $S$ has orbits $\Omega(S)$ satisfying Equation~\ref{eqn:markovped} if and only if there is an isometry group $G$ whose orbits $\Omega(G)$ are identical to $\Omega(S)$.
\end{cor}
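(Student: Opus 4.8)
The plan is to prove the two implications separately, noting that the forward direction is already in hand. First I would dispatch the easy implication ($\Rightarrow$): if $\Om(S)$ satisfies Equation~(\ref{eqn:markovped}), then Theorem~\ref{thm:isometry} directly produces a group of isometries $G$ with $\Om(G) = \Om(S)$, which is exactly what is claimed.

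The remaining implication ($\Leftarrow$) is the part attributed to Browning and Browning~\cite{Browning2002}, and I would argue it as follows. Suppose $G$ is a group of isometries with $\Om(G) = \Om(S)$; it suffices to show $\Om(G)$ satisfies Equation~(\ref{eqn:markovped}), since then so does $\Om(S)$. Fix an orbit $\omega_i$, points $x_1, x_2 \in \omega_i$, and another orbit $\omega_j$. Since $x_1$ and $x_2$ lie in a single orbit of $G$, pick $T \in G$ with $T(x_1) = x_2$. Because $\omega_j$ is an orbit of $G$, the element $T$ restricts to a bijection of $\omega_j$ onto itself, and because $T$ is an isometry, $|T(y) \oplus x_2| = |T(y) \oplus T(x_1)| = |y \oplus x_1|$ for each $y \in \omega_j$. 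Re-indexing the sum by $z = T(y)$ then gives
\[ \sum_{y \in \omega_j} s^{|y \oplus x_1|} \;=\; \sum_{y \in \omega_j} s^{|T(y) \oplus x_2|} \;=\; \sum_{z \in \omega_j} s^{|z \oplus x_2|}, \]
which is Equation~(\ref{eqn:markovped}).

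I expect no real obstacle here: the only points to check are that every $T \in G$ permutes each orbit $\omega_j$ of $G$ (immediate from the definition of orbit, together with injectivity and finiteness of $\omega_j$) and that Equation~(\ref{eqn:markovped}), being symmetric in $x_1$ and $x_2$, need only be verified for one choice of $T$ sending $x_1$ to $x_2$ rather than for all group elements. All the genuine difficulty of the corollary is concentrated in Theorem~\ref{thm:isometry}, whose explicit construction of an isometry generating set supplies the nontrivial converse direction.
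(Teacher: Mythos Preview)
Your proposal is correct and follows the same two-direction structure as the paper's proof: Theorem~\ref{thm:isometry} for ($\Rightarrow$) and the Browning--Browning result for ($\Leftarrow$). The only difference is that the paper simply cites \cite{Browning2002} for the backward direction, whereas you spell out the short argument (pick $T$ sending $x_1$ to $x_2$, use that $T$ permutes each orbit, and re-index the sum); your write-up of that step is fine.
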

\begin{proof}
Browning and Browning~\cite{Browning2002} showed that all isometry groups $G$ satisfy Equation~\ref{eqn:markovped}.  Theorem~\ref{thm:isometry} completes the proof.
\end{proof}

It is a well-known fact in algebra that any partition can be the orbits of some symmetry group, and that the orbits of any symmetry group are a partition~\cite{Durbin2000}.  We will recapitulate this simple result next.

\begin{cor}
\label{cor:partition}
A partition satisfies Equation~\ref{eqn:markovped} if and only if it is equivalent to the orbits of some isometry group.
\end{cor}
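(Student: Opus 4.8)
The plan is to deduce this immediately from the corollary following Theorem~\ref{thm:isometry}, together with the elementary algebraic fact already recalled: every partition of $\mathcal{H}_n$ is the orbit set of some permutation group, and conversely. First I would observe that Equation~(\ref{eqn:markovped}) is, as written, a condition on a partition alone --- it refers only to the blocks $\omega_i,\omega_j$ and to Hamming distances between their elements, not to any particular group generating them --- so for a group $S$ the assertions ``$\Omega(S)$ satisfies Equation~(\ref{eqn:markovped})'' and ``the partition $\Omega(S)$ satisfies Equation~(\ref{eqn:markovped})'' are one and the same.

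For the forward direction, suppose a partition $\mathcal{P} = \{\omega_1,\dots,\omega_k\}$ of $\mathcal{H}_n$ satisfies Equation~(\ref{eqn:markovped}). I would realize $\mathcal{P}$ concretely as an orbit set by taking $S := \mathrm{Sym}(\omega_1) \times \cdots \times \mathrm{Sym}(\omega_k)$, where each factor acts on $\mathcal{H}_n$ by permuting the vertices of its own block and fixing every vertex outside it. Since the blocks are disjoint and exhaust $\mathcal{H}_n$, the factors commute, $S$ is a genuine subgroup of the symmetric group on the $2^n$ vertices, and the orbit of any vertex $y$ under $S$ is exactly the block containing $y$ (a singleton block simply gives a trivial factor). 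Hence $\Omega(S) = \mathcal{P}$, so $\Omega(S)$ satisfies Equation~(\ref{eqn:markovped}) by hypothesis, and the corollary after Theorem~\ref{thm:isometry} then supplies an isometry group $G$ with $\Omega(G) = \Omega(S) = \mathcal{P}$, as required.

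For the converse direction, suppose $\mathcal{P} = \Omega(G)$ for some isometry group $G$. By the Browning--Browning result quoted in the proof of the preceding corollary, the orbits of any isometry group satisfy Equation~(\ref{eqn:markovped}); applying this to $G$ shows that $\mathcal{P}$ satisfies the equation.

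There is essentially no obstacle here: the single point requiring a moment's care is the realizability of an arbitrary partition $\mathcal{P}$ as $\Omega(S)$ for an explicit group $S$, and the direct-product construction above makes that transparent. Consequently the corollary is a short bookkeeping consequence of Theorem~\ref{thm:isometry}, its first corollary, and the standard partition/orbit correspondence.
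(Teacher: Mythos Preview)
Your proof is correct and follows essentially the same route as the paper: realize the given partition as the orbit set of some permutation group on $\mathcal{H}_n$, then invoke Theorem~\ref{thm:isometry} (via its first corollary) to obtain an isometry group with the same orbits, and cite Browning--Browning for the converse. The only cosmetic difference is that the paper builds $S$ from a single cycle on each block rather than your direct product of full symmetric groups, but either construction yields the same orbits.
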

\begin{proof}
Assume we are given a partition $\{W_1,...,W_k\}$ of set ${\cal H}_n$
where $W_i \cup W_j = \emptyset$, $\cup_{i} W_i = {\cal H}_n$ and the
partition satisfies Equation~\ref{eqn:markovped}. We will create a
symmetry group $S$ whose orbits $\Omega(S) = \{W_1,...,W_k\}$.  This
is easily done.  For each set in the partition $W_i$, create a
permutation with a single cycle $\pi_i = (y_1~y_2~...~y_l)$ where all
$y_j \in W_i$.  Make a generating set $A = \{\pi_i : 1 \le i \le k\}
\cup \pi_e$ where $\pi_e$ is the identity permutation.  Then group $S
= \langle A \rangle$ clearly has orbits $\Omega(s) = \{W_1,...,W_k\}$.
By Theorem~\ref{thm:isometry} there is an isometry group with the same
orbits.

Assume we are given an isometry group $G$.  Clearly, by Browning and
Browning's proof~\cite{Browning2002}, the orbits define a partition
$\Omega(G)$ that satisfies Equation~\ref{eqn:markovped}.
\end{proof}

Browning and Browning~\cite{Browning2002}  also showed that any isometry
$T:\mathcal{H}_{n} \to \mathcal{H}_{n}$ can be uniquely written as
$T=\pi \circ \phi_a$ where $\pi$ is a permutation on $n$ elements, the
bits of the hypercube vertex, and $\phi_a$ is a switch function where
$\phi_a(x) = a \oplus x$ where $\oplus$ is the bit-wise XOR operation.

An isometry describes some aspect of the pedigree graph.  For example,
an isometry consisting of a switch and the identity permutation can be
used to enumerate one element from each orbit by simply fixing the
1-bit's value and then enumerating all possible values for the other
switch bits.  On the other hand, an isometry consisting of the
identity switch (all zero) and a permutation of one cycle can be used
to enumerate one element for each orbit by listing the 1-prefixes of
the permuted bits, (i.e.~for three bits, the representatives are
$000$, $100$, $110$, and $111$).

\subsection{Emission Property}

The Markov property is not enough to ensure that the HMM based on
Markov chain $Y_t$ has the same likelihood as the HMM for $X_t$.  In
order to ensure this, we introduce a property on the emission
probabilities, namely that all the elements in one orbit must have
identical emission probabilities.  We call these orbits the emission 
partition, since they are induced by the emission probability.
In order to define this object, we need to introduce some more notation.

Recall that $R_x$ is the inheritance graph for inheritance vector $x$.
The relationship structures we wish to preserve are the IBD
relationships on the individuals of interest $I$.  Relationships on individuals translate to relationships between their alleles.  Let $I_m$ be the
maternal alleles of all the individuals of interest and $I_p$ be the
paternal alleles of all the individuals of interest.  The inheritance
graph $R_x$ is a forest; let $CC(R_x)$ refer to the connected
components of $R_x$ which are labeled with $I_m \cup I_p$.
The same-labeled connected components induce a partition 
\[ D_x = \{y \in \mathcal{H}_{n} | CC(R_y) = CC(R_x)\}.\]
We call the partition $D := \{D_x | \forall x\}$ the identity states, since it indicates a particular 
identity-by-descent (IBD) relationship among the labeled individuals.
These have been well studied~\cite{Jacquard1972,Thompson1974,Karigl1982}.

Looking at a small example, containing two siblings who are the
individuals of interest and their two parents, we see that the
identity states are:
\begin{eqnarray*}
D_{0000} &=& \{0000, 0101,1010,1111\}, \\
D_{1000} &=& \{1000,0010,1101,0111\}, \\
D_{0100} &=& \{0100,1110,0001,1011\}, \\
D_{1100} &=&\{1100,0110,1001,0011\},
\end{eqnarray*}
where the zero indicates paternal origin of the allele.  But if we
think carefully about this example, there is symmetry in the pedigree,
namely swapping the two parents, that does not appear in this
partition.  Due to this reason, we need to consider the following
object.

Let $Pr[O|X_t]$ be the probability that the state $X_t$ of the HMM
emits the observed data $O$ at site $t$.  
Then the partition $E$ induced
on the state space by the emission probability is the \emph{emission
partition} containing all distinct sets $E_x$ where
\[
E_x = \{y \in \mathcal{H}_n ~| ~Pr[O=o|X_t=x] = Pr[O=o|X_t=y] ~\forall o \}
\]
and
\[
Pr[O=o| X_t = x] = \sum_{\tilde{o}~consistent~with~R_y} \frac{1}{2^{h(o)}} \prod_{c \in CC(R_x)} Pr[c(\tilde{o})]
\]
where $o$ is a vector of sets, $\tilde{o}$ is a vector of tuples that is an ordered version of $o$, meaning that $o_i \equiv \tilde{o}_i$ while removing the order information from $\tilde{o}_i$, and $c(\tilde{o})$ gives the allele of $\tilde{o}$ that is assigned to that connected component, and $h(o)$ is the number of heterozygous sites in $o$.  Note that each connected component is a tree, and has exactly one founder.
Also, the identity states are consistent with these probabilities, but the identity states are a sub-partition of the emission partition.  Specifically, from our previous example, $0100 \notin D_{1000}$, but $0100 \in E_{1000}$.
Indeed, the emission partition for the example is $\{\{D_{0000}\}, \{D_{1000},D_{0100}\}, \{D_{1100}\}\}$.

We say that the expectation Markov chain $Y_t$ satisfies the
\emph{emission property} if and only if it preserves the emission
partition in order for the corresponding HMM to have the correct
likelihood.  To preserve the emission partition, all the group
elements $T \in G$ must satisfy $T(y) \in E_x$ for all $y \in E_x$ and
for all $x$.

Now, it is necessary to compute the $E_x$ quickly.  The na\"{i}ve algorithm would be slow, since we would have to consider all pairs $x,y$ and all possible data $d$.
Neither can we use the methods in the literature dealing with \emph{condensed identity states}~\cite{Jacquard1972,Thompson1974,Karigl1982}, because the literature computes pedigree-free condensed identity states.  That calculation takes the sets from the identity states and applies permutations of the form $\pi_{i} = (i_m~i_f)$ to swap the alleles of an individual of interest $i \in I$.  However these permutations can violate the inheritance rules specified by a fixed pedigree.  For the example above, take vector $1010 \in D_{0000}$ and swap the alleles of the second child $\pi_2(1010) = 1001 \in D_{1100}$.  This clearly produces a partition that is not the emission partition, and so it would violate the property that we wish to enforce.  Several works on optimal state space reduction for pedigree HMMs have discussed the condensed identity states~\cite{Browning2002,McPeek2002inference} for state-space reduction.  It would appear that they did not formulate the emission partition that was mentioned by Geiger, et al.~\cite{Geiger2009} and that is used here.

The main difference between $D$ and $E$ partitions is that the probability $Pr[D=d| X_t = x]$ has a product over indistinguishable connected components, whereas the identity states distinguishes each connected component.  The partition $D$ must additionally answer the question of which connected components are exchangeable.  Let $I'$ be the individuals of interest having parents who are not individuals of interest.
So, we can rewrite $E_x$ as follows:
\[
E_x = \{y \in \mathcal{H}_n ~|~ \exists \phi ~\textrm{a proper isomorphism s.t.}~ CC(R_x) = CC(\phi(R_y)) \}
\]
where an isomorphism $\phi$ is \emph{proper} if and only if $\phi$ is an isomorphism from $R_y$ to $R_x$ where for all $i\in I' \cup V \setminus I$, either $\phi(i_f) = i_f$ and $\phi(i_m) = i_m$ or $\phi(i_f) = i_m$ and $\phi(i_m) = i_f$.
This definition of $E_x$ is easier to compute, because now we can do an $O(n)$ check to see if the forest of trees in $x$ and $y$ are isomorphic, which leads to an $O(n2^{2n})$ calculation.  However, we can do better.

From the above definition, we see that in order for two inheritance vectors to be isomorphic, the pedigree graph itself (as opposed to the inheritance graph) must have an automorphism.  If we can identify all the relevant automorphisms for the pedigree graph, then we can make a set $A$ of permutations (one for each automorphism), and use a group theoretic algorithm for obtaining the orbits of $\langle A \rangle$ acting on the partition $\{D_x~|~ \forall x \in \mathcal{H}_n\}$ to obtain the desired emission partition.

First to obtain the automorphisms of the graph, we will employ a na\"{i}ve strategy.  Let $i \in I' \cup V \setminus I$ be an individual of interest.
Recall that any proper isomorphism must map one branch of $i$'s ancestral lineage to the other branch.  In order to be consistent, for the set $J = \{i\} \cup \{j ~|~ j \textrm{ full sib of } i\}$, the automorphism must $\phi(j_m) = j_f$ for $j \in J$. 
Considering $i$'s parents and proceeding backward in time, the sub-pedigree connected to the ancestors forms a directed acyclic graph (dag) with in-degree two.  Without loss of generality, we can assume that this sub-pedigree has no individuals in $I \setminus \{i\}$, because, if there were, there would be no proper automorphism and, if there is a descendant of the ancestors not in $I$, it can be trivially removed from the pedigree~\cite{McPeek2002inference}.  Therefore, we may consider only the tree of direct ancestors branching backward in time.
At each branch point, $b$, in this tree, we assign an indicator $\gamma_b = 1$ if the father is to the left and the mother to the right.  There are $O(2^n)$ assignments of these variables $\{\gamma_b~|~\forall b\}$.  For each possible assignment, perform an $O(n)$ graph-traversal operation to check whether this assignment is an automorphism.  We take the first automorphism $\phi$ that we find, because any other $\phi'$ from the same lineage will satisfy $CC(\phi(R_x)) = CC(\phi'(R_x))$ for all inheritance vectors $x$.

Now that we have the automorphisms, we can write them as isometries and put them in set $A$ and consider the orbits of the group $\langle A \rangle$ acting on the identity states.  These orbits are the emission partition.  To obtain these orbits, we will use the well-known orbit algorithm~\cite{Holt2005} from computational group theory which will be recapitulated here.  Notice, that we wish to apply this algorithm to the existing partition $M := \{D_x| \forall x \}$.  Take one set $D_x \in M$ and initialize its orbit  as $O_x := \{D_x\}$.  At the end of the following procedure $O_x$ will contain all the elements in $x$'s orbit.  For every element $D_x \in O$ and every automorphism permutation $a \in A$, compute $y := a(z)~~\forall z \in D_x$.  If $y \notin O_x$, then this $y$ and all the elements in its set $D_y$ are added to $O_x$ and $D_y$ is removed from $M$.  This procedure is repeated until $M$ is empty.  Notice that $CC(y) = CC(a(z))$ is compared to $CC(x)$ to determine if $y$ is is in $O_x$.

Since the comparison $CC(y) = CC(x)$ can be computed in linear time, 
the running time to obtain the automorphisms is $O(n2^n)$ and the
orbit algorithm runs in $O(n2^n)$ time.  This means that obtaining
automorphisms of the pedigree is preferable to checking pairs of
inheritance vectors for isomorphism.

\subsection{Examples}
\label{sec:example}

We will consider two examples, here.  The first is \emph{a} specific three-generation pedigree while the second is a result that applies to \emph{all} two-generation pedigrees.

\subsubsection{Three-Generation Pedigree}

For example, given 4 meioses for two half-cousins, $A$ and $B$, with
one shared grandparent, their common grandparent and their respective
parents who are half-siblings, we have 16 hypercube vertices (see
Figure~\ref{fig:halfcousins}).  Our individuals of interest are $I =
\{A,B\}$.  The emission partition is, in this case, identical to the identity states and contains the sets 
\begin{eqnarray*}
E_1 &=& \{\{A_p\}, \{A_m,B_m\}, \{B_p\} \} \textrm{ and } \\
E_2 &=& \{\{A_p\}, \{A_m\}, \{B_m\}, \{B_p\} \},
\end{eqnarray*} 
since these are the only
partitions of alleles of individuals $I$ that have non-empty sets in
the emission partition.  The emission partition induced on the
hypercube vertices is: $E_{x_1}=\{1001,1111\}$ and
\[
E_{x_2}=\mathcal{H}_n \setminus E_{x_1}.
\]

Notice that in this instance we cannot use the emission partition $\{E_x| ~\forall x\}$ as the state space of a new Markov chain.  For example, if
we were to let $Z_t$ be a Markov chain on the partition given by the emission partition, then the Markov criteria would fail to hold.
Specifically, consider state $x_1=0001$ and $x_2=0011$.  
Then by checking Equation (\ref{eqn:markov}), we have
$\sum_{y \in E_{x_1}} Pr[X_t = y | X_t = 0001] = \theta(1-\theta)^3 + \theta^3(1-\theta)$
but 
$\sum_{y \in E_{x_1}} Pr[X_t = y | X_t = 0011] = 2 \cdot \theta^2(1-\theta)^2$.

The largest partition of ${\mathcal H}_n$ that satisfies the Markov
criteria is 
\begin{eqnarray*}
P_J &=& \{1001,1111\}, \\
P_R &=& \{0010,0100\}, \\
P_G &=& \{1011,1101\}, \\
P_B &=& \{0000,0110\}, \\
P_K &=& \{0011,0101,1010,1100\}, \textrm{ and } \\
P_L &=& \{0001,0111,1000,1110\}.  
\end{eqnarray*}
Let $H$ be the matrix of pair-wise Hamming distances between all the
vertices of the hypercube. 
Then the transition probabilities take the form:
For example, $Pr[Y_{t+1}=P_L | Y_t=P_K] = 2 \theta(1-\theta)^3 + 2 \theta^3(1-\theta)$.

\begin{figure}[ht]
  \begin{center}
    \includegraphics{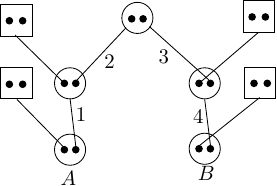}
  \end{center}
  \caption{{\bf Two Half-Cousins.}    \label{fig:halfcousins}
(Left Panel) A pedigree with four non-founders of which two are
half-cousins together with their common grandparent.  As before, the
two black dots for each person represent their two alleles, and the
alleles of each individual are ordered, so that the left allele, or
paternal allele, is inherited from the person's father, while the
right, maternal allele is inherited from the mother.  The two cousins
are labeled $A$ and $B$.  It is easy to see that the only possible IBD
is between alleles $A_m$ and $B_m$, the maternal alleles of
individuals $A$ and $B$, respectively.  (Right Panel) This makes the
four male founders irrelevant to the question of IBD.  The four
meioses are labeled in the order of their bits, left-to-right,and the
inheritance states are represented in binary as $x_1 x_2 x_3 x_4$.
Let $x_i=0$ if that allele was inherited from the parent's paternal
allele, and $x_i=1$ if from the maternal allele.  For instance, $A$
and $B$ are IBD only for inheritance states $1001$ and $1111$.
}
\end{figure}

Notice that this partition can be expressed as the orbits of a group
of isometries, because $G = \langle (1~4), (2~3), \phi_{0110} \rangle$ does not violate the IBD
class.

\subsubsection{Two-Generation Pedigrees}

\begin{lem}
\label{lem:twogen}
For any two-generation pedigree, the partition defined by the emission partition, $C = \{E_x | ~\forall x\}$, satisfies the Markov Property.
\end{lem}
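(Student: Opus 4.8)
The plan is to reduce the lemma to Corollary~\ref{cor:partition}, which says that a partition satisfies Equation~(\ref{eqn:markovped}) --- the Markov property --- exactly when it is the orbit set of a group of isometries of $\mathcal{H}_n$. So it is enough to exhibit an isometry group whose orbits are the emission partition $C$. Building on the reduction given just before the lemma, for \emph{any} pedigree $C$ is the set of orbits of the group $A$ of proper pedigree automorphisms --- each realized as an isometry $\pi\circ\phi_a$ in the Browning--Browning normal form~\cite{Browning2002} --- acting on the identity-state partition $\{D_x\}$. Hence the single fact I would need is that for a two-generation pedigree $\{D_x\}$ is itself the orbit set of an isometry group $F$, for which the natural candidate is $F=\langle\phi_{a_f}: f\text{ a founder},\ f\notin I\rangle$, where $a_f$ is the indicator vector of the meioses leaving $f$, so that $\phi_{a_f}$ exchanges $f$'s two alleles.

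Granting $\Omega(F)=\{D_x\}$, the lemma follows quickly. Conjugating a founder switch $\phi_{a_f}$ by one of the automorphism isometries yields $\phi_{a_{g(f)}}$, again a generator of $F$ (proper automorphisms permute the founders among themselves and preserve membership in $I$), so $A$ normalizes $F$; therefore $\Omega(\langle A\cup F\rangle)$ is exactly the coarsening of $\{D_x\}=\Omega(F)$ that fuses the $D$-sets lying in a common $A$-orbit, which is $C$. Thus $C=\Omega(G)$ for the isometry group $G=\langle A\cup F\rangle$, and Corollary~\ref{cor:partition} gives that $C$ satisfies Equation~(\ref{eqn:markovped}).

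The content is therefore $\Omega(F)=\{D_x\}$, and this is the one place the two-generation hypothesis is used. After the standard reduction that deletes any individual with no descendant in $I$ --- harmless, because the deleted meioses are independent coordinates on which the emission does not depend, so $C$ only picks up free hypercube factors, which preserve Equation~(\ref{eqn:markovped}) --- every meiosis runs from a founder straight to an individual of interest, so each inheritance graph $R_x$ is a disjoint union of stars centered at the alleles of the founders. Consequently the IBD relation on $I_m\cup I_p$ recorded by $CC(R_x)$ is decided one bit at a time: two non-founder alleles are IBD precisely when their single defining meiosis bits select the same founder allele. That every generator of $F$ preserves each $D_x$ is immediate. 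Conversely, when $CC(R_x)=CC(R_y)$, for each founder $f$ the labeled alleles descending from $f$ are grouped the same way in $x$ and $y$, which forces the bits of $x$ and $y$ leaving $f$ to agree up to a single global swap of $f$'s two alleles (and to agree outright when $f\in I$, consistently with having left $\phi_{a_f}$ out of $F$); composing the needed $\phi_{a_f}$'s carries $x$ to $y$. This ``one bit at a time'' rigidity is exactly what breaks in three or more generations, where IBD between two alleles is a conjunction of bits along competing ancestral paths --- the three-generation half-cousin example of Section~\ref{sec:example} being precisely a case where the emission partition fails the Markov property.

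The hard part will be the bookkeeping in $\Omega(F)=\{D_x\}$, especially the founders that themselves lie in $I$: both their alleles are labeled, their swap is not a symmetry, and $\phi_{a_f}$ must be excluded from $F$, so one has to check that in that case $CC(R_x)=CC(R_y)$ already forces the relevant meiosis bits to coincide. One must also confirm that the global-swap choices for different founders can be made simultaneously, and that proper automorphisms really do permute founders and preserve $I$, as the normalization step assumed. A secondary difficulty is stating cleanly why the star structure of two generations makes $\{D_x\}$ rigid enough to reconstruct the group element, so that the argument is visibly --- and necessarily --- inapplicable to deeper pedigrees.
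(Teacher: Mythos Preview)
Your proposal is correct and takes essentially the same approach as the paper: both exhibit the isometry group generated by the founder switches $\phi_{a_f}$ (for untyped founders $f$) together with the parent-swap permutations $\pi_m$ for untyped monogamous founding pairs, and then argue that its orbits are exactly the emission partition $C$. Your factorization through $\{D_x\}=\Omega(F)$ followed by the normalization step is a slightly more structured packaging of the same computation the paper carries out directly---there, for each pair $x_1,x_2\in E_x$, the required product of switches and parent-swaps is built connected component by connected component.
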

\begin{proof}
We can establish this by finding a group of isometries whose orbits are the emission partition.
This group has the generating set $A$ where $A = \{\phi_f : \forall f\} \cap \{\pi_m: \forall m\}$ and $\phi_f$ and $\pi_m$ are defined as follows.  For founder $f$, $\phi_f$ is a switch having bits set as follows.  Let $i_1,..,i_c$ be the meioses from founder $f$ to each of the founders $c$ children.  Then $\phi_{fi} = 1$ if $i = i_j$ for some $j$ and $\phi_{fi} = 0$ otherwise.  Let $m = (f_1,f_2)$ which are untyped monogamous married founding pairs.  Then $\pi_m = c_1 \circ c_2 \circ ... \circ c_k$ is a permutation composed of $k$ disjoint cycles, one for each child.  For child $i$ with meiosis bits $i_0,i_1$, $c_i = (i_0~ i_1)$.  The group of isometries $G = \langle A \rangle$.

Now, we simply need to establish that the emission partition $C$ is the orbits of this group $G$.  
There is no element $T \in G$ that maps $x \in E_{x_1}$ to $y \in E_{x_2}$, since every $\phi_f$ and $\pi_m$ map the bits of $x$ in ways that maintain $CC(R_x)$.  
Now, we simply need to show that for any $x_1,x_2 \in E_x$, there is always some element $T \in G$ such that $y = T(x)$.  Consider each connected component in $CC(R_x)$ where $x$ and $y$ differ.  The alleles connected in this connected component must all share inheritance through one of the founder bits of the common parents.  If there is only one common parent, the switch for that founder must map between $x$ and $y$ in the bits for that connected component.  If there are two common parents, then there must exist a composition of two founder switches and the founder permutation that maps between $x$ and $y$ for the bits in that connected component.  The complete map $T$ is simply the composition of the isometries for each connected component.
\end{proof}

In the next section, we will introduce the Maximal Ensemble Problem, and we will soon see that this lemma provides a fast method to obtain the optimal partition for two-generation pedigrees.

\subsection{The State-Space Reduction Problem}

There have been three state-space reduction problems posed, we restate
these here.  Given the original pedigree state space $\mathcal{H}_n$,
there are three ways to reduce the state space.
\begin{description}
\item[Maximum Ensemble Problem] Find the 
partition, $\{W_1,...,W_k\}$ of $\mathcal{H}_n$ that satisfies both
the Markov property and the emission property and that minimizes the 
number of sets in the partition: $~\argmax_{\{W_1,...,W_k\}} k$.

\item[Maximum Isometry Group Problem~\cite{Browning2002}] Find the isometry 
group $G$ of maximal size whose orbits $\Omega(G)$ satisfy the emission property. 

\item[Maximum Symmetry Group Problem] Find the symmetry  
group $G$ of maximal size whose orbits $\Omega(G)$ satisfy both the Markov property and the emission property.
\end{description}

We have already proven that all symmetry groups that satisfy the
Markov property have an isometry group with equivalent orbits.  This
means that the later two problems are identical.  Indeed since these
last two problems are equivalent, we will refer to them collectively
as the {\bf Maximum Group Problem.}  The remaining question is the
relationship between the maximum ensemble problem and the maximum
isometry group problem.  We will first introduce a Maximum Ensemble
Algorithm and use it to prove that the solution to the Maximum
Ensemble Problem is unique.  Using the uniqueness result, we will be
able to prove the equivalence of the Maximum Ensemble and Maximum
Isometry Group Problems.

\section{Maximum Ensemble Algorithm}
\label{algorithm}

We will introduce an algorithm that solves the Maximum Ensemble
Problem.  Consider the emission partition containing, $E_x$ for all
$x$ of interest.  Of course the sets in the emission partition are
disjoint.  Consider the $(2^n)!$ permutations on the vertices of the
hypercube.  Naively, these are all candidate permutations for our
group, if we wish to find the maximal group.  However in this section,
we focus on finding the sub-partition of the emission partition that
yields the maximum ensemble solution.  Given the state space, the
partition can be found in linear time.

We do this by iteratively sub-partitioning the partition according to
the coefficients and powers appearing in
Equation~\ref{eqn:markovped}. See Algorithm~\ref{alg:bipartition}:
Bipartition, which takes as input a subpartition of the emission partition.
This recursion is possible since the Markov property must produce a
partition that is a sub-partition of the emission partition (i.e. in order to
respect the emission partition).  Indeed, as shown in
Lemma~\ref{lem:partition}, any pair of vectors $x_1,x_2$ that violate
the Markov property must appear in separate sets of the partition.
This recursive approach will at worst produce a partition with each
element in its own set.

Algorithm~\ref{alg:bipartition} only needs to compute the
$2^n \times 2^n$ matrix of distances between IBD vectors, as well as
do some bookkeeping.  So, the total running time is $O(2^{2n})$.
Since the iterative sub-partitioning at minimum splits sets in two and
does not introduce new inequalities, the number of iterations of the
partition algorithm is $O(log(2^n))= O(n)$.  One iteration of
Algorithm~\ref{alg:bipartition} requires $O(2^{2n})$ time
for each iteration, since we have to check the $2^n \times 2^n$ matrix
of distances between partition elements.  So, the total running time
is $O(n 2^{2n})$.

Now, we need to establish the correctness and uniqueness of the partition.

\begin{algorithm}[ht!]
 \caption{ Bipartition($P$) in $O(2^{2n})$ time }
 \label{alg:bipartition}
 \begin{algorithmic}
 	\REQUIRE ~\\
		$P$: current subpartition of the emission partition \\
	\ENSURE ~\\
		$P'$: violates fewer equations of the Markov property \\
	\MAIN ~\\
	 \STATE $P' = \emptyset$
   	 \FORALL{$W_i \in P$}
	  \STATE $C_{i0} = W_i$ 
	  \STATE $C_{i1} = \emptyset$
    	  \FORALL{$W_j \in P$}
	   \STATE $a_k = 0$ for all $0 \le k \le n$
	   \STATE $s_{x'} = 0$ for all $x' \in C_{i0}$
	   \STATE Let $x_1 \in C_{i0}$ be a fixed element of $C_{i0}$.
	   \FORALL{$x \in C_{i0}$}
 	    \STATE $b_k = 0$ for all $0 \le k \le n$
	    \FORALL {$y \in W_j$}
		\STATE Let $k = |y \oplus x|$
		\IF{$x == x_1$}
			\STATE $a_k++$
		\ENDIF
		\STATE $b_k++$
	    \ENDFOR 
	    \IF{$a_k \ne b_k$ for some $0 \le k \le n$}
		\STATE $s_{x} = 1$
	    \ENDIF
	   \ENDFOR 
	  \STATE \{Bipartition $W_i$\}
	  \FORALL{$x \in C_{i0}$}
		\STATE $C_{i0} \gets C_{i0} \setminus \{x\}$
		\STATE $C_{s_x} \gets C_{s_x} \cup \{x\}$
          \ENDFOR  
         \ENDFOR
	 \STATE $P' \gets P' \cup \{C_{i0},C_{i1}\}$
 	 \ENDFOR
	\STATE RETURN $P'$
  \end{algorithmic}
\end{algorithm}

\begin{lem}
\label{lem:partition}
Let $W_i, W_j$ be two sets of the partition such that 
$x_1,x_2 \in W_i$ and $x_1,x_2$ violate the Markov property in 
Equation~\ref{eqn:markovped}, i.e.~such that
\[\sum_{y\in W_j} s^{|y \oplus x_1|} \ne \sum_{y \in W_j} s^{|y \oplus x_2|}.\]
Then even if $W_j$ is subdivided, $x_1,x_2$ continue to violate
Equation~\ref{eqn:markovped}.
\end{lem}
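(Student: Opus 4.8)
The plan is to exploit the additivity of the two sums in Equation~\ref{eqn:markovped} over any refinement of $W_j$. First I would pass to the coefficient reformulation already used in the text: for fixed $x_1,x_2$ and an arbitrary subset $W \subseteq \mathcal{H}_n$, write $\sum_{y \in W} s^{|y \oplus x_1|} = \sum_{k=0}^n a_k(W)\, s^k$ and $\sum_{y \in W} s^{|y \oplus x_2|} = \sum_{k=0}^n b_k(W)\, s^k$, where $a_k(W) := \#\{y \in W : |y \oplus x_1| = k\}$ and $b_k(W) := \#\{y \in W : |y \oplus x_2| = k\}$. Since $s > 0$ and a polynomial is determined by its coefficients, the hypothesis that $x_1,x_2$ violate the Markov property with respect to $W_j$ is precisely the statement that there is an index $k^\ast$ with $a_{k^\ast}(W_j) \ne b_{k^\ast}(W_j)$.

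Next I would take an arbitrary subdivision $W_j = V_1 \sqcup V_2 \sqcup \cdots \sqcup V_r$ into pairwise disjoint nonempty pieces, which is exactly the kind of object a later round (or rounds) of Bipartition can produce from $W_j$. Because the $V_\ell$ partition $W_j$ and the counting functions $a_k,b_k$ are additive over disjoint unions, we have $a_{k^\ast}(W_j) = \sum_{\ell=1}^r a_{k^\ast}(V_\ell)$ and $b_{k^\ast}(W_j) = \sum_{\ell=1}^r b_{k^\ast}(V_\ell)$. From $a_{k^\ast}(W_j) \ne b_{k^\ast}(W_j)$ it follows that $\sum_{\ell} a_{k^\ast}(V_\ell) \ne \sum_{\ell} b_{k^\ast}(V_\ell)$, so by pigeonhole there is at least one index $\ell$ with $a_{k^\ast}(V_\ell) \ne b_{k^\ast}(V_\ell)$.

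For that $V_\ell$, the polynomials $\sum_{y \in V_\ell} s^{|y \oplus x_1|}$ and $\sum_{y \in V_\ell} s^{|y \oplus x_2|}$ have distinct coefficients of $s^{k^\ast}$, hence are distinct polynomials and are unequal at every $s > 0$, in particular at $s = \theta/(1-\theta)$ for $0 < \theta < 1/2$. Thus $x_1,x_2$ still violate Equation~\ref{eqn:markovped}, now with respect to the sub-part $V_\ell$; and since any refinement of $W_i$ that keeps $x_1$ and $x_2$ in a common set leaves this argument untouched, the violation persists through every step of the algorithm. I do not expect a substantive obstacle here: the content is just the observation that a single nonzero difference of counts in $W_j$ cannot be cancelled by splitting $W_j$ into pieces, and the only point needing care is keeping the polynomial-versus-pointwise distinction straight, which is handled by the "polynomials are determined by their coefficients and powers" fact already invoked in the text.
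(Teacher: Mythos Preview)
Your proof is correct and follows essentially the same approach as the paper: reduce to coefficients, pick a power $k^\ast$ where $a_{k^\ast}(W_j)\ne b_{k^\ast}(W_j)$, and use additivity over the pieces of the subdivision to conclude some piece inherits the inequality. The paper states it only for a bipartition $W_j=C\sqcup D$ while you handle an arbitrary $r$-way refinement directly; one small slip to fix is the claim that distinct polynomials ``are unequal at every $s>0$''---that is false in general, but irrelevant here since the Markov property is required for all $\theta\in(0,1/2)$, so inequality of coefficients already gives the violation.
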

\begin{proof}
This is proven by a  simple property of polynomials.  Since 
\[\sum_{y\in W_j} s^{|y \oplus x_1|} \ne \sum_{y \in W_j} s^{|y \oplus x_2|},\]
there must be at least one power for which the polynomial coefficients
disagree.  Let $a_k$ and $b_k$ be the coefficients from the left- and
right-had sides respectively.  Let $A(k) = \{y : |y \oplus x_1|=k \}$, so
that $a_k = |A(k)|$, and let $B(k) = \{y : |y \oplus x_2|=k \}$, so that $b_k
= |B(k)|$.  Let $C,D$ be any bipartition of $W_j$.  Therefore $C$ and $D$
induce a partition of $A(k)$ and $B(k)$.  Specifically $A(k)$ is
partitioned into sets $A(k) \cap C$ and $A(k) \cap D$, while $B(k)$ is
partitioned into $B(k) \cap C$ and $B(k) \cap D$.  Since $|A(k)| \ne |B(k)|$, then at least one of
\[|A(k) \cap C| \ne |B(k) \cap C|\]
or
\[|A(k) \cap D| \ne |B(k) \cap D|.\]
Therefore at least one of
\[\sum_{y\in C} s^{|y \oplus x_1|} \ne \sum_{y \in C} s^{|y \oplus x_2|},\]
or
\[\sum_{y\in D} s^{|y \oplus x_1|} \ne \sum_{y \in D} s^{|y \oplus x_2|}.\]
\end{proof}

\begin{lem} (Loop Invariant.)
\label{lem:invariant}
Once $C_{i0}$ is added to $P'$, it is never subdivided again in any
iteration.  This is equivalent to stating the invariant that for any $i$,
\[
\sum_{y \in W_j} s^{|y \oplus x_1|} = \sum_{y \in W_j} s^{|y \oplus x_2|} ~~\forall~ x_1,x_2 \in C_{i0} ~~\forall~ W_j \in P'
\]
\end{lem}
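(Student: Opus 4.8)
The plan is to treat the displayed equation as the substance of the lemma and to establish it in two stages: first that any two elements of a block $C_{i0}$, at the moment it is appended to $P'$, agree on all of the Markov polynomials of Equation~\ref{eqn:markovped} taken against every block of the \emph{input} partition $P$; and then that this agreement survives against every block of the \emph{output} partition $P'$ and of every partition produced by later calls to Algorithm~\ref{alg:bipartition} --- which is precisely the claim that $C_{i0}$ is never subdivided again.

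For the first stage I would follow the block $C_{i0}$ through the middle loop over $W_j \in P$ and use that this set only ever loses elements: an element $x$ is moved out of $C_{i0}$ during the $W_j$-iteration exactly when its count vector $b_k = \#\{y \in W_j : |y \oplus x| = k\}$ differs, for some $k$, from the count vector of the representative $x_1$ picked in that iteration, while the representative itself always keeps $s_{x_1}=0$. Hence two elements that both remain in the final $C_{i0}$ were present throughout, and in each $W_j$-iteration each of them matched that iteration's representative, so they matched each other; by the ``coefficients of like powers'' remark just after Equation~\ref{eqn:markovped} this is exactly $\sum_{y \in W_j} s^{|y \oplus x_1|} = \sum_{y \in W_j} s^{|y \oplus x_2|}$ for all $x_1, x_2 \in C_{i0}$ and all $W_j \in P$.

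The second stage is where the real difficulty lies, since $P'$ is strictly finer than $P$ and equality of the polynomials against a union $W_j$ does not by itself yield equality against the pieces $C_{j0}, C_{j1}$ into which Bipartition splits $W_j$; by additivity of the count vectors it suffices to treat one piece, say $C_{j0}$. The route I would take is the one used in the proof of Theorem~\ref{thm:isometry}: introduce the translation $\phi_{x_1 \oplus x_2} : y \mapsto y \oplus (x_1 \oplus x_2)$, which is a Hamming isometry carrying $\{y : |y \oplus x_1| = k\}$ bijectively onto $\{y : |y \oplus x_2| = k\}$ for every $k$, so the claim collapses to showing that $\phi_{x_1 \oplus x_2}$ maps $C_{j0}$ onto itself whenever $x_1, x_2$ lie in a common $C_{i0}$. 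I expect this to be the main obstacle: it ought to follow by combining the isometry-group description of emission-respecting Markov partitions (Theorem~\ref{thm:isometry} and Corollary~\ref{cor:partition}) with the description of $C_{i0}$ and $C_{j0}$ as single classes of one refinement step, but pinning it down seems to genuinely require the pedigree/hypercube structure rather than general partition-refinement reasoning.

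Finally I would finish with an induction on the iteration index of the outer loop: once $C_{i0}$ is Markov-consistent against all of $P'$, no element of it gets flagged in the next call to Bipartition, so $C_{i0}$ reappears unchanged in the next partition; and any equality that already holds against a block is inherited, via Lemma~\ref{lem:partition} read in contrapositive together with additivity, when that block is refined. Therefore $C_{i0}$ is a block of every partition the algorithm produces after it is created, which is the assertion of the lemma, and the displayed invariant holds with $W_j$ ranging over $P'$.
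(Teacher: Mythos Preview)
Your first stage is fine and matches the paper. The gap is exactly where you flag it: you never establish that $\phi_{x_1\oplus x_2}$ stabilizes $C_{j0}$, and the appeal to Theorem~\ref{thm:isometry} and Corollary~\ref{cor:partition} is circular. Those results describe partitions that \emph{already} satisfy Equation~(\ref{eqn:markovped}); the intermediate $P'$ need not, and $C_{j0}$ is not yet known to be an orbit of any isometry group, so nothing guarantees that the particular translation $\phi_{x_1\oplus x_2}$ carries $C_{j0}$ to itself. You correctly sense that general partition-refinement reasoning is insufficient, but the hypercube structure you hope will rescue the translation does not do so here; membership of $x_1,x_2$ in a common $C_{i0}$ constrains only their distance profiles to blocks of $P$, not the action of $\phi_{x_1\oplus x_2}$ on those blocks.

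The paper's route is different and avoids translations altogether. Its key step is a reciprocity: once $C_{i0}$ is a block of the current input partition, then when $W_j$ is processed in the outer loop $C_{j0}$ is formed precisely by retaining those $y$ whose distance profile toward $C_{i0}$ matches a fixed representative, so $|D(k,y)|:=\#\{x\in C_{i0}:|x\oplus y|=k\}$ is constant across $y\in C_{j0}$. The paper then exploits this dual constancy directly, choosing bijections $\phi:C_{j0}\to C_{j0}$ and $\pi:C_{i0}\to C_{i0}$ with $\pi(x_1)=x_2$ and $\pi$ carrying $D(k,y_1)$ onto $D(k,\phi(y_1))$; chasing the definitions gives $y_1\in B(k,x_1)\iff \phi(y_1)\in B(k,\pi(x_1))$, hence $|B(k,x_1)|=|B(k,x_2)|$ for every $k$. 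This is a combinatorial duality on the bipartite incidence $\{(x,y)\in C_{i0}\times C_{j0}:|x\oplus y|=k\}$, not an isometry argument, and it is the ingredient your proposal lacks.
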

\begin{proof}
Notice that the above invariant is a consequence of both the loop
``foreach $W_j \in P$'' and of the Bipartition algorithm.  For the
base case $C_{i0} = \emptyset ~~\forall i$, and the invariant holds
trivially.

Now we need to inductively prove that the invariant holds.  Assume
that for some $i$, the invariant holds.  Now, consider the loop for a
fixed $W_j \in P$.  $W_j$ may be partitioned into some $C_{j0}$ and
$C_{j1}$.  Our task is to prove that for the new partition of $W_j$ the invariant holds, i.e.~that 
\[
\sum_{y \in C_{j0}} s^{|y \oplus x_1|} = \sum_{y \in C_{j0}} s^{|y \oplus x_2|} ~~\forall~ x_1,x_2 \in C_{i0}.
\]

From the invariant, we have $\sum_{y \in W_j} s^{|y \oplus x_1|} = \sum_{y
\in W_j} s^{|y \oplus x_2|} ~~\forall x_1,x_2 \in C_{i0}$.  Fix $k$ and define the set 
\[A(k,x_1):= \{y \in W_j : |y \oplus x_1| = k\} ~~\forall~ x_1 \in C_{i0},\] then the
coefficient of the $k$th power in the equation is $|A(k,x_1)|$.
Furthermore, we have $|A(k,x_1)|=|A(k,x_2)|$ for all $x_1,x_2 \in
C_{i0}$.

Notice that $C_{j0}$ was created with the property that 
\[\sum_{x_1 \in C_{i0}} s^{|x_1 \oplus y_1|} = \sum_{x_1 \in C_{i0}} s^{|x_1 \oplus y_2|}\]
 for all $y_1,y_2 \in C_{j0}$.  
Define the set
\[B(k,x_1) := \{y_1 \in C_{j0}: |x_1 \oplus y_1| = k\}~~\forall~ x_1 \in C_{i0},\]
and its mirror set
\[D(k,y_1) := \{x_1 \in C_{i0} : |x_1 \oplus y_1|=k\}~~\forall~ y_1 \in C_{j0}.\]
Notice that $A(k, x_1) \cap C_{j0} = B(k,x_1)$ for all $x_1\in C_{i0}$.
 
Now we will use the property $|D(k,y_1)| = |D(k,y_2)|$ for all
$y_1,y_2 \in C_{j0}$ to prove that $|B(k,x_1)| = |B(k,x_2)|$ for all
$x_1,x_2 \in C_{i0}$.  Let $\phi:C_{j0} \to C_{j0}$ be a bijective
map on $C_{j0}$ such that $\phi(x_1)=x_2$. Pick a bijective map $\pi:C_{i0} \to
C_{i0}$ that maps elements of $D(k,y_1)$ to elements of
$D(k,\phi(y_1))$.  Now, we will show that $y_1 \in B(k,x_1)$ if and
only if $\phi(y_1) \in B(k,\pi(x_1))$.  Now $y_1 \in B(k,x_1) = A(k,x_1) \cap C_{j0}$, so this is equivalent to $x_1 \in D(k,y_1)$, which in turn is true if and only if $\pi(x_1) \in D(k,\phi(y_1))$, or if and only if
$\phi(y_1) \in A(k,\pi(x_1))$. Then since $\phi(y_1) \in C_{j0}$, we have that $\phi(y_1)
\in B(k, \pi(x_1))$.

This proves that $|B(k,x_1)| = |B(k,x_2)|$ for all $x_1,x_2 \in C_{i0}$.
Therefore we have
\[
\sum_{y \in C_{j0}} s^{|y \oplus x_1|} = \sum_k |B(k,x_1)| s^{k} ~~\forall~ x_1 \in C_{i0}.
\]
Therefore, we have the invariant that
\[
\sum_{y \in C_{j0}} s^{|y \oplus x_1|} = \sum_{y \in C_{j0}} s^{|y \oplus x_2|} ~~\forall~ x_1,x_2 \in C_{i0}
\]
\end{proof}

\begin{thm} (Uniqueness of the Solution.)
\label{thm:uniqueness}
The Maximum Ensemble Algorithm finds the unique solution to the
Maximum Ensemble Problem.
\end{thm}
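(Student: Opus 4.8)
The plan is to prove two things and combine them: (i) the partition $P^{*}$ returned by the Maximum Ensemble Algorithm is feasible, i.e.\ it satisfies both the Markov property (Equation~(\ref{eqn:markovped})) and the emission property; and (ii) every feasible partition is a refinement of $P^{*}$. Together these say that $P^{*}$ is the greatest element, in the refinement order, of the set of feasible partitions, hence the unique partition whose blocks are as large as possible, which is exactly the Maximum Ensemble Problem. So both the optimality and the uniqueness claims reduce to (i) and (ii).

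For (i): the algorithm starts from the emission partition and only ever subdivides blocks, so $P^{*}$ refines the emission partition; since every element of a block of the emission partition has identical emission probabilities, so does every element of any sub-block, and thus $P^{*}$ satisfies the emission property. For the Markov property, recall that each call to \textsc{Bipartition} splits every block into at most two pieces and introduces no new inequalities, so after $O(n)$ calls some call leaves the partition unchanged and the algorithm halts with $P^{*}$. A call leaves a block $W_{i}$ unchanged precisely when every $x\in W_{i}$ receives flag $s_{x}=0$ against every $W_{j}$, i.e.\ $\sum_{y\in W_{j}} s^{|y\oplus x_{1}|}=\sum_{y\in W_{j}} s^{|y\oplus x|}$ for all $x_{1},x\in W_{i}$ and all $W_{j}$; by Lemma~\ref{lem:invariant} this is exactly the assertion that the final partition satisfies Equation~(\ref{eqn:markovped}). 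Hence $P^{*}$ is feasible.

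For (ii): let $Q$ be any partition satisfying the Markov and emission properties, and let $P_{0}$ be the emission partition and $P_{1},\dots,P_{T}=P^{*}$ the successive outputs of \textsc{Bipartition}. I claim by induction on $t$ that $Q$ refines $P_{t}$. The base case $t=0$ holds because $Q$ satisfies the emission property, hence refines the emission partition. For the step, assume $Q$ refines $P_{t}$, fix a block $W_{i}\in P_{t}$ split by \textsc{Bipartition} into $C_{i0}$ and $C_{i1}$, and let $V\in Q$ with $V\subseteq W_{i}$. If $V$ met both $C_{i0}$ and $C_{i1}$, pick $u\in V\cap C_{i0}$ and $w\in V\cap C_{i1}$; since $u$ is never removed from $C_{i0}$, it is present throughout the inner loop over $W_{j}\in P_{t}$, and at the iteration in which $w$ is flagged and moved to $C_{i1}$, both $u$ and the current reference $x_{1}$ remain in $C_{i0}$ with flag $0$. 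Then $u$ has the same distance-profile to $W_{j}$ as $x_{1}$ while $w$'s differs, so $\sum_{y\in W_{j}} s^{|y\oplus u|}\ne\sum_{y\in W_{j}} s^{|y\oplus w|}$. But $W_{j}\in P_{t}$, so by the inductive hypothesis $W_{j}=\bigsqcup_{l} V_{l}$ with each $V_{l}\in Q$; since $u,w$ lie in the same block $V$ of $Q$, the Markov property of $Q$ gives $\sum_{y\in V_{l}} s^{|y\oplus u|}=\sum_{y\in V_{l}} s^{|y\oplus w|}$ for every $l$, and summing over $l$ contradicts the inequality. Hence $V\subseteq C_{i0}$ or $V\subseteq C_{i1}$, so $Q$ refines $P_{t+1}$, and in particular $Q$ refines $P^{*}$.

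Finally, since $P^{*}$ is feasible and every feasible $Q$ refines it, $P^{*}$ is the unique coarsest feasible partition (any feasible $Q$ has at least as many parts, with equality only when $Q=P^{*}$), i.e.\ the unique solution to the Maximum Ensemble Problem. I expect the main obstacle to be the inductive step above: because \textsc{Bipartition} processes the blocks $W_{j}$ sequentially and re-chooses the reference $x_{1}$ at each iteration, one must argue carefully that a surviving element $u$ and a removed element $w$ of the same $Q$-block genuinely acquire distinct distance-polynomials with respect to some single $P_{t}$-block, which is what couples the algorithm's splitting rule to the Markov property of $Q$. (Lemma~\ref{lem:partition} supplies the complementary, easier fact that a violating pair stays separated under further subdivision of the offending block; running the induction against the intermediate partitions $P_{t}$ as above already absorbs this.)
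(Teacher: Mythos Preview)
Your argument is correct, and it is in fact tighter than the paper's own proof. The paper establishes feasibility essentially as you do in (i), but for optimality and uniqueness it argues loosely that ``sets are only divided if they violate the Markov property'' and then invokes Lemma~\ref{lem:partition} (violating pairs stay separated under further subdivision) together with Lemma~\ref{lem:invariant} (the sets $C_{i0}$ are never split again) to conclude that the output is order-independent and hence unique. Your route is different: you fix an arbitrary feasible partition $Q$ and show by induction along the run that $Q$ refines every intermediate $P_{t}$, hence refines $P^{*}$. This gives both maximality and uniqueness in one stroke, and it makes explicit the step the paper glosses over---namely, why a split performed by \textsc{Bipartition} is \emph{forced} by any feasible partition. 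Your handling of the delicate point (the reference $x_{1}$ is re-chosen at each inner iteration, and $C_{i0}$ shrinks as the loop proceeds) is exactly right: by choosing $u$ in the final $C_{i0}$ and tracking the first iteration $j^{*}$ at which $w$ is flagged, you obtain a single $W_{j^{*}}\in P_{t}$ against which $u$ and $w$ have distinct distance-polynomials, and the inductive hypothesis then decomposes $W_{j^{*}}$ into $Q$-blocks to reach the contradiction.

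One small remark on (i): you do not actually need Lemma~\ref{lem:invariant} to conclude that the terminal partition satisfies the Markov property. When a call to \textsc{Bipartition} leaves $P$ unchanged, every $x\in W_{i}$ agrees with the current reference against every $W_{j}$, which is already Equation~(\ref{eqn:markovped}); the loop invariant is really what the paper uses to argue non-oversplitting, a role your inductive step takes over.
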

\begin{proof}
The partitioning algorithm produces a partition that respects
the emission partition, since it begins with the partition given by the emission partition 
and sub-partitions it.  The algorithm also produces partitions that
respect the Markov property, since it iteratively sub-partitions the
emission partition until the Markov property is satisfied.  Notice
that the algorithm is guaranteed to find such a partition since the
trivial partition, i.e. the original state space, satisfies the Markov
property. Since partition sets are only divided if they violate the Markov
property, the algorithm necessarily finds an optimal partition.  Only
the proof of uniqueness remains.

By Lemma~\ref{lem:partition} the solution is invariant to the order in
which the bipartitions are made, since any $x_1,x_2$ which violate the
Markov property must be put into separate sets of the partition at some point.
Indeed, by Lemma~\ref{lem:invariant} we know that once $C_{i0}$ is
created, it is never partitioned again.  Since we begin with a unique
partition, the emission partition, the sequence of $C_{i0}$, created by 
different calls to Algorithm~\ref{alg:bipartition}, will be the final
sets in the partition, up to reordering.  Therefore the Maximum
Ensemble Algorithm finds the unique partition which is the solution to
the Maximum Ensemble Problem.
\end{proof}

\section{Equivalence}

Now, using the uniqueness of a partition as the solution to the
Maximum Ensemble Problem, we can prove equivalence of the Maximum
Ensemble Problem and the Maximum Isometry Group Problem.

\begin{thm}(Equivalence of Maximum Ensemble Problem and Maximum Isometry Group Problem)
\label{thm:equivalence}
A partition $\{W_1,W_2,...,W_k\}$ is a solution to the Maximum
Ensemble Problem if and only if there is an isometry group $G$ that is
a solution to the Maximum Group Problem having orbits $\Omega(G)$
equivalent to the partition: for all $\omega$, we have $\omega
\in \Omega(G)$ if and only if there exists a set in the partition 
$W_j$ such that $W_j = \omega$.
\end{thm}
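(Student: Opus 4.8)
The plan is to prove the two directions of the equivalence separately, using Theorem~\ref{thm:uniqueness} and Corollary~\ref{cor:partition} as the main tools. For the forward direction, suppose $\{W_1,\dots,W_k\}$ is a solution to the Maximum Ensemble Problem. By definition it satisfies both the Markov property (Equation~\ref{eqn:markovped}) and the emission property. By Corollary~\ref{cor:partition}, since the partition satisfies Equation~\ref{eqn:markovped}, there is an isometry group $G$ whose orbits $\Omega(G)$ are exactly $\{W_1,\dots,W_k\}$; since the partition also satisfies the emission property, the orbits of $G$ satisfy the emission property, so $G$ is a feasible solution to the Maximum Isometry Group Problem. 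It remains to argue that $G$ has maximal size among such isometry groups. Here I would invoke the equivalence of the Maximum Isometry Group Problem and the Maximum Symmetry Group Problem (already established in the excerpt, collectively the Maximum Group Problem): a group of maximal size is one whose orbit structure is a coarsest feasible partition, and by Theorem~\ref{thm:uniqueness} there is a \emph{unique} coarsest feasible partition, namely the one produced by the Maximum Ensemble Algorithm, which equals $\{W_1,\dots,W_k\}$. Thus any feasible isometry group has orbits refining $\{W_1,\dots,W_k\}$, so $G$ — achieving exactly this partition — is of maximal size.

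For the reverse direction, suppose $G$ is an isometry group solving the Maximum Group Problem, with orbits $\Omega(G) = \{\omega_1,\dots,\omega_k\}$. By Browning and Browning's result (cited in the excerpt, also restated in the first Corollary), the orbits of any isometry group satisfy Equation~\ref{eqn:markovped}, so $\Omega(G)$ satisfies the Markov property; and feasibility for the Maximum Group Problem includes satisfying the emission property. Hence $\Omega(G)$ is a feasible partition for the Maximum Ensemble Problem. To see it is of maximal total size, note that if some feasible partition $\{W_1,\dots,W_\ell\}$ had strictly larger total set-size (equivalently, strictly fewer sets, or one strictly coarser), then by Corollary~\ref{cor:partition} it would be realized by an isometry group $G'$ with $|\Omega(G')| < |\Omega(G)|$; one then checks that $G'$ (suitably generated, e.g.\ by single long cycles on each orbit as in the proof of Corollary~\ref{cor:partition}) can be taken to have more elements than $G$, contradicting maximality of $G$. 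Alternatively, and more cleanly, appeal directly to uniqueness: the Maximum Ensemble Algorithm produces the unique coarsest feasible partition $\mathcal{W}^\star$; $\Omega(G)$ is feasible, so it refines $\mathcal{W}^\star$; if the refinement were proper, one could build an isometry group realizing $\mathcal{W}^\star$ that is strictly larger than $G$, contradicting maximality. So $\Omega(G) = \mathcal{W}^\star$, which is the Maximum Ensemble solution.

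The main obstacle I anticipate is pinning down the precise sense in which "maximal size" (number of group elements) for the isometry group corresponds to "maximal total set-size" (equivalently, coarsest) for the partition — the two notions of "maximal" are a priori different, and the argument needs the fact that coarser orbit partitions can always be realized by strictly larger groups (which is why one uses single-cycle generators on each orbit, making group size monotone in orbit coarseness). This monotonicity, combined with the uniqueness theorem, is what forces both optimization problems to have the same optimum. I would state this monotonicity as a short auxiliary observation and then let Theorem~\ref{thm:uniqueness} do the rest of the work, keeping the two-way implication short.
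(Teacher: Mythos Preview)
Your plan is essentially the paper's: both directions run through Corollary~\ref{cor:partition} to pass between partitions and isometry groups, and both lean on Theorem~\ref{thm:uniqueness} to rule out a competing optimum. The structure, including the contrapositive for the reverse direction, matches.

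The one substantive difference is how the ``larger group'' is produced in the reverse direction. You propose an abstract monotonicity principle (coarser orbits $\Rightarrow$ strictly larger group), justified via single long cycles on each orbit. That device works for arbitrary permutation groups but not directly for \emph{isometry} groups: a single long cycle on a block is typically not an isometry of $\mathcal{H}_n$, so the group it generates need not be a candidate for the Maximum Isometry Group Problem. The paper instead reaches back to the explicit construction in Theorem~\ref{thm:isometry}: given $x_1\in W_i$ and $x_2\in W_{i'}$ lying in a common block $V_j$ of the strictly coarser feasible partition, the isometry $\pi_{x_1,x_2}$ built there belongs to the isometry group $G^V$ realizing $\{V_j\}$ but cannot lie in $G^W$ (it moves $x_1$ across $W$-blocks). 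This gives the concrete witness you need without the monotonicity claim. If you swap your single-cycle idea for this appeal to $\pi_{x_1,x_2}$, your argument and the paper's coincide.
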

\begin{proof}
First, we want to show that if a partition is a solution to the Maximum Ensemble Problem, then there is a group with the equivalent orbits that is a solution to the Maximum Group Problem.
Due to Corollary~\ref{cor:partition}, we know that only
isometry groups satisfy the Markov property.  Any partition which is a
solution for the Maximum Ensemble Problem is also, in particular, the
orbits of a group of isometries, $G$.  Assume that $G$ is not the
maximal isometry group.  Because, if not, then there must be some
isometry which can be added.  And, if it were added, it would join two
orbits into one.  Therefore joining two sets of the partition into
one, which contradicts the assumption that the partition was maximal.
Furthermore, since $G$ satisfies the emission property, its orbits
must be a subpartition of the emission partition.  There is no other group
$G'$ with larger size, since the solution to the Maximum Ensemble
Problem is unique (Theorem~\ref{thm:uniqueness}).  A solution to the Maximum
Ensemble Problem is a solution to the Maximum Group Problem.

For the converse we argue by contrapositive.  That is to say, if $G$ is an group of symmetries and its orbits are not the a solution to the Maximum Ensemble Problem, then the partition given by the orbits of $G$ is not a solution to the Maximum Group Problem.
Assume that partition
$\{W_1,...,W_k\}$ is not a solution to the Maximum Ensemble Problem, 
but that it satisfies Equation~\ref{eqn:markovped} and the emission property.  
Then there must also
exist a maximum ensemble partition $\{V_1,...,V_l\}$ such that $l < k$.  This is because the partition $W$ is not the maximal ensemble partition, and this inequality is strict by the uniqueness proven in 
Theorem~\ref{thm:uniqueness}. 
Because $V$ satisfies the Markov and emission properties, it must be a subpartition of $W$ by Lemma~\ref{lem:partition}.
Therefore, there must exist some $i$, $i',$ and
$j$, such that $W_i \subset V_j$ and $W_{i'} \subset V_j$.

By Corollary~\ref{cor:partition}, there are groups $G^W$ and $G^V$ with
orbits $\{W_1,...,W_k\}$ and $\{V_1,...,V_l\}$, respectively.  Choose
$x_1 \in W_i \cap V_j$ and $x_2 \in W_{i'} \cap V_j$.  Then
$\pi_{x_1,x_2}$ from Theorem~\ref{thm:isometry} will be in $G^V$ and
not in $G^W$.  Therefore, $G^W$ is not a solution to the Maximal Isometry
Group Problem; proving the claim.
\end{proof}

\section{Bootstrapping with Known Isometries}

As noted by Geiger et al.~\cite{Geiger2009}, there are two types of
isometries that can be detected easily.  There are the founder
isometries and the chain isometries where there is an outbred lineage
consisting of multiple ungenotyped generations.  

The founder isometries apply only to ungenotyped founders and are
switches on the bits for the edges adjacent to the founder.
Specifically, if ${i_1},...,{i_c}$ are the meiosis bits between the
ungenotyped founder and each of the $c$ children of the founder, then
the switch is given by the bit vector $X_{i} = 1$ if $i = i_j$ for
some $j$ and $X_{i} = 0$ otherwise.  Since the founder alleles are
indistinguishable (due to the missing genotype), we can fix one bit
adjacent to the founder and enumerate the other bits adjacent that
founder.  These founder isometries can be found in $O(n)$ time.


The chain isometries apply to a lineage of $l$ individuals, from
oldest to youngest $i_1,i_2,...,i_l$ where each individual has exactly
one parent from the lineage, one founder parent, one child, and no
siblings, except $i_l$ which may have any number of siblings.  All
individuals except the most recent must be ungenotyped.  The isometry
is then the permutation on every bit, except the oldest, i.e.~$\pi =
(1_1~i_2~i_3~...~i_l)$ Please see Geiger, et al.~\cite{Geiger2009} and
Browning and Browning~\cite{Browning2002} for examples.  These chain
isometries can be found in $O(n^2)$ time.

It would seem that there are other classes of isometries which can be
found quickly, such as the permutations shown in the example in
Section~\ref{sec:example}.  The exact algorithms for finding other
classes of isometries remain an open problem.  Furthermore, it is
unknown whether all the isometries in the maximal group can be found
efficiently.

\subsection{Representatives}

Let $A$ be a generating set of isometries that generate group $G =
\langle A \rangle$, such as the founder and chain isometries.  In order to compute the bootstrap maximum ensemble states, We need to obtain the orbits of $G$ acting on ${\cal H}_n$.  We can obtain them in $O(k|A|o)$ time where $k$ is the number of orbits and $o = \max_{x\in {\cal H}_n} |\omega(x)|$, provided that orbit membership can be checked in constant time.

Let $M = {\cal H}_n$ initially.  We take any vector $x$ out of $M$ and find its orbit $O$.  Initially let $O = \{x\}$.  Now, for every $x \in O$ and every $a \in A$, compute $y = a(x)$.  If $y \notin O$, add $y$ to $O$ and remove $y$ from $M$.  Repeat until $M$ is empty.

Following this procedure, we have all of the orbits of $G$ acting on ${\cal H}_n$.  For each orbit, we will fix a representative to use in the bootstrap maximal ensemble algorithm.

\subsection{Bootstrap Maximal Ensemble}

Now that we have $k$ representatives, one from each orbit of group $G
= \langle A \rangle$, we can introduce a bootstrap version of the
Maximal Ensemble algorithm.  In this case, we can compute
Equation~(\ref{eqn:markovped}) once per representative.

First, we need to partition our representatives according to the set of the emission partition that they belong to.  Consider the emission partition, $\{E_x | ~\forall x \}$, and partition the representatives into these sets.
Also partition ${\cal H}_n$ according to the emission partition.  These two
equivalent partitions define our initial partitions.

Now, we can recursively sub-divide the representatives whenever
Equation~(\ref{eqn:markovped}) is violated.  Notice that we can
compute this equation with $x$ being the representative and $\omega_j$
is some set of the current partition of ${\cal H}_n$.  Each time we
subdivide the partition of the representatives, we need to also
subdivide the partition of ${\cal H}_n$ in the equivalent fashion.
Suppose that we have representative $x$ that we have put into a new
set in the representative partition.  We obtain the equivalent
partition of ${\cal H}_n$ by creating a new set containing $x$ and all
the vectors $y \in \omega(x)$ the orbit of $x$ under the action of $G$.
The recursive subdivision continues until the Markov property is satisfied.

Since the recursive sub-partitioning at minimum splits sets in two,
the number of iterations required is $O(n)$.  Checking the Markov
properties for each iteration requires $O(k2^{n})$ time where $k$ is
the number of representatives, since we have to check the $k
\times 2^n$ matrix of distances, or sums of distances, between
partition elements.  So, the total running time is $O(n k 2^{n})$.

\section{Running Times}

Notice that the naive calculation of Equation~(\ref{eqn:transitions})
requires $O(k2^n)$ time where $k \le 2^n$ is the number of sets in the
partition and $n$ is the number of meioses in the pedigree.  The
calculation is as follows, for each set $W_i$ in the partition, choose
a representative $x \in W_i$.  For each of the sets in the partitions
$W_j$, compute the transition probability $Pr[X_{t+1} \in W_j | X_t =
x]$.  This last step seems to require enumeration of the inheritance
paths.

The running time of the state-space reduction is the running time of
the ensemble algorithm and the running time of the transition
calculation.  It is interesting to note that calculating the
transition probabilities in Equation~\ref{eqn:transitions} is faster
than the HMM forward-backward algorithm having running time
$O(m2^{2n})$.  This means there is potential to improve the
state-space reduction running time, if there is a more efficient
maximal ensemble algorithm.

Regardless of whether the over-all running time of the state-space
reduction is determined by calculating the transition function or the
ensemble states, all the algorithms here produce savings when the
forward-backward algorithm is run.  This is because a $k$-set partition
of the states results in the forward-backward algorithm having $O(mk^2)$
running time where $m$ is the number of sites.  Furthermore, since the
original state space has an $O(m2^{2n})$ forward-backward algorithm
and the ensemble algorithm is $O(n2^{2n})$, the ensemble algorithm is
more efficient when $n < m$ which is typically the case.  The bootstrap algorithm is even more efficient having a running time of $O(nk2^n)$.

\section{Simulation Results}

We simulated pedigrees under a Wright-Fisher model with monogamy where
each pair of monogamous individuals has a Poisson distributed number of
offspring.  There are $n$ individuals per generation and $\lambda$ is
the mean number of offspring per monogamous pair.  The individuals of
interest, $I$, are the extant individuals, i.e.~those in the most
recent generation or, equivalently, the nodes with out-degree zero.
These pedigrees have no inter-generational mating due to how the
Wright-Fisher model is defined.  To get a half-sibling pedigree, each
edge of the pedigree had 50\% chance of have a new parent drawn at
random.  Since monogamy was not preserved during this random process,
the resulting pedigree had half-siblings.

Running the simulation process and the maximal ensemble algorithm 100
times produced
Figure~\ref{fig:simulation}.  The maximal ensemble algorithm produced 
exponential reductions in the size of the state-space.  Whether the
relationships have half-siblings seems not to influence the practical
applicability of the maximal ensemble algorithm (data not shown).

In practice, the maximal ensemble algorithm seems limited to pedigrees
of roughly 14 meioses while the bootstrap maximal ensemble algorithm
seems limited to about 18 meioses.  Of course, both methods yield the
same reduced state space.  Given the practical success of the
bootstrap maximal ensemble algorithm, we recommend that the bootstrap
maximal ensemble algorithm be employed for state-space reduction.

\begin{figure}[ht]
  \begin{center}
    \includegraphics[scale=0.75]{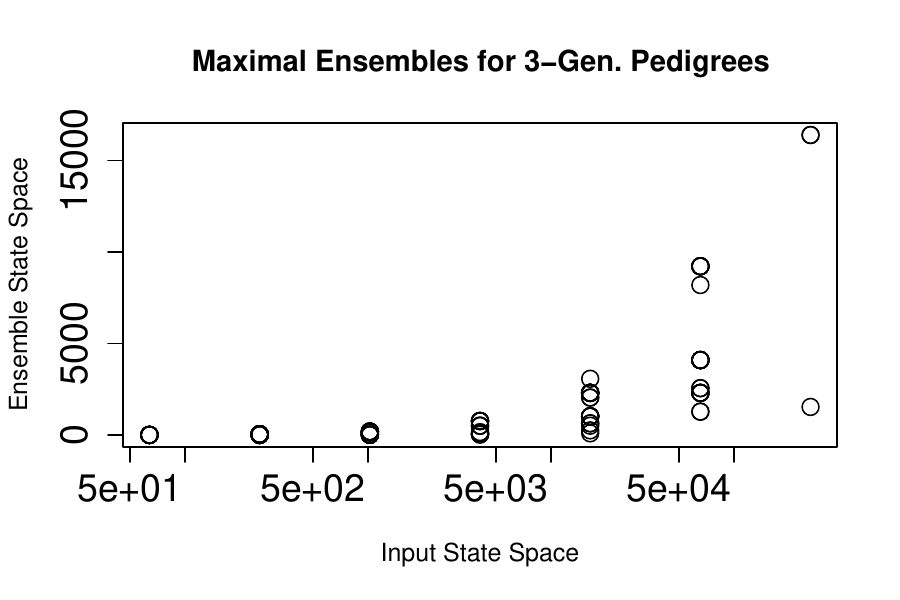}
  \end{center}
  \caption{{\bf Maximal Ensemble Algorithm Results.}   \label{fig:simulation}
The y-axis is the original size of the state space, and the x-axis give the number of ensemble states produced by the maximal ensemble algorithm.  All of the simulated pedigrees had three generations and Poisson mean $\lambda=2$.  One hundred simulation replicates had $n=4$.}
\end{figure}

\section{Discussion}

Even though past efforts at state-space reduction have focused on
finding groups of isometries, it is
clear that this is an equivalent problem to finding the optimal
sub-partition of the emission partition that respects the Markov property.
Although the paper mostly discusses the pedigree state-space, the
maximum ensemble algorithm is general to any HMM.

Even if some isometries can be obtained efficiently, for example the
founder and chain isometries, computation of the transition
probabilities according to Equation~\ref{eqn:transitions} seems to
require enumeration of the inheritance vectors.  The naive algorithm
requires $O(k2^n)$ where $k$ is the number of orbits and $n$ is the
number of meioses in the pedigree.  Due to this fact, and the fact
that the forward-backward algorithm for pedigree HMMs has running time
$O(m2^{2n})$, it is an advantage to use exponential algorithms
to find the maximal state-space reduction.  Indeed, the maximal
ensemble algorithm we introduce here has running time $O(n2^{2n})$
which yields more efficient HMM algorithms when $n < m$ where $n$ is
the number of meioses in the pedigree and $m$ is the number of sites.

In addition to introducing the maximal ensemble algorithm, we
introduced a bootstrap maximal ensemble algorithm which runs in
$O(nk2^{n})$ where $k$ is the number of orbits of the bootstrap
isometry group.  This allows our algorithm to take advantage of known 
isometries such as the founder and chain isometries.

It would appear that there might be an $O(2^{2n})$ algorithm for the
maximum ensemble problem.  This can be seen by the looking at the for
loop of Algorithm~\ref{alg:bipartition}: Bipartition that says
``foreach $x \in A_0$ do''.  This could easily be changed to ``foreach
$A_{\delta}$ and foreach $x \in A_{\delta}$ do''.  However, this
algorithm appears to require sorting the sets in the emission
partition in increasing order by size.  We do not consider the details
of this improved algorithm due to space considerations.

In practice, the maximal ensemble algorithm obtains exponential
reductions in the state-space required for an HMM likelihood
calculation.  The algorithm operates on up to about 18 meioses.

There are several open problems of interest.  First, the computational
complexity of the maximum ensemble problem is open.  Second, an open
problem is the computational complexity of finding the transition
rates after having determined the partition of the state space.
Although naive algorithms are exponential, it is unclear whether
there are approximation algorithms or polynomial-time algorithms for
special cases.

Another very interesting direction is approximation algorithms where
instead of guaranteeing equality in Equation~(\ref{eqn:markovped}), we
could allow for bounded inequalities. Let $Y_t$ be the approximate
Markov chain and $X_t$ be the original Markov chain.  The idea is that
a bound on the inequality for the transition probabilities of $Y_t$
would allow for a larger reduction in the state-space.  In addition,
we would hope that the bound on the inequality would guarantee that
the deviation of $Y_t$'s stationary distribution is bounded relative
to the stationary distribution of $X_t$.

\paragraph{Acknowledgements}
Many thanks go to Yun Song for suggesting the problem and to Eran
Halperin for the random pedigree simulator. K.K. was partially
supported by NSF grants OISE-0730136 and DMS-1106770.

\bibliography{pedigree}

\begin{thebibliography}{10}

\bibitem{Abecasis2002}
GR~Abecasis, SS~Cherny, WO~Cookson, et~al.
\newblock Merlin-rapid analysis of dense genetic maps using sparse gene flow
  trees.
\newblock {\em Nature Genetics}, 30:97--101, 2002.

\bibitem{Browning2002}
S.~Browning and B.L. Browning.
\newblock {On reducing the statespace of hidden Markov models for the identity
  by descent process}.
\newblock {\em Theoretical Population Biology}, 62(1):1--8, 2002.

\bibitem{Burke1958}
C.~J. Burke and M.~Rosenblatt.
\newblock A {Markovian} function of a {Markov} chain.
\newblock {\em Ann. Math. Stat.}, 29:1112--1122, 1958.

\bibitem{Coop2008}
G.~Coop, X.~Wen, C.~Ober, et~al.
\newblock {High-Resolution Mapping of Crossovers Reveals Extensive Variation in
  Fine-Scale Recombination Patterns Among Humans}.
\newblock {\em Science}, 319(5868):1395--1398, 2008.

\bibitem{Donnelly1983}
K.~P. Donnelly.
\newblock The probability that related individuals share some section of genome
  identical by descent.
\newblock {\em Theoretical Population Biology}, 23(1):34 -- 63, 1983.

\bibitem{Durbin2000}
J.~Durbin.
\newblock {\em Modern Algebra: An Introduction}.
\newblock John Wiley and Sons, Inc., 4th edition, 2000.

\bibitem{Fishelson2005}
M.~Fishelson, N.~Dovgolevsky, and D.~Geiger.
\newblock Maximum likelihood haplotyping for general pedigrees.
\newblock {\em Human Heredity}, 59:41--60, 2005.

\bibitem{Geiger2009}
D.~Geiger, C.~Meek, and Y.~Wexler.
\newblock {Speeding up HMM algorithms for genetic linkage analysis via chain
  reductions of the state space}.
\newblock {\em Bioinformatics}, 25(12):i196, 2009.

\bibitem{Holt2005}
D.~F. Holt, B.~Eick, and E.~A. O'Brien.
\newblock {\em Handbook of computational group theory}.
\newblock Discrete mathematics and its applications. Chapman \& Hall/CRC, Boca
  Raton, 2005.

\bibitem{Jacquard1972}
A.~Jacquard.
\newblock Genetic information given by a relative.
\newblock {\em Biometrics}, 28(4):1101--1114, 1972.

\bibitem{Junttila2004}
T.~A. Junttila.
\newblock New orbit algorithms for data symmetries.
\newblock {\em Application of Concurrency to System Design, International
  Conference on}, 0:175, 2004.

\bibitem{Karigl1982}
G.~Karigl.
\newblock {A mathematical approach to multiple genetic relationships}.
\newblock {\em Theoretical Population Biology}, 21:379--393, 1982.

\bibitem{Kirkpatrick2011xxxx}
B.~Kirkpatrick.
\newblock Non-identifiable pedigrees and a {Bayesian} solution.
\newblock {\em Int. Symp. on Bioinformatics Res. and Appl. (ISBRA)}, 2012.

\bibitem{Lauritzen2003}
S.~L. Lauritzen and N.~A. Sheehan.
\newblock Graphical models for genetic analysis.
\newblock {\em Statistical Science}, 18(4):489--514, 2003.

\bibitem{Lorentson2001}
L.~Lorentsen and L.~M. Kristensen.
\newblock Exploiting stabilizers and parallelism in state space generation with
  the symmetry method.
\newblock {\em Application of Concurrency to System Design, International
  Conference on}, 0:211, 2001.

\bibitem{McPeek2002inference}
M.S. McPeek.
\newblock {Inference on pedigree structure from genome screen data}.
\newblock {\em Statistica Sinica}, 12(1):311--336, 2002.

\bibitem{Risch1996}
N.~Risch and K.~Merikangas.
\newblock {The Future of Genetic Studies of Complex Human Diseases}.
\newblock {\em Science}, 273(5281):1516--1517, 1996.

\bibitem{Sobel1996}
E.~Sobel and K.~Lange.
\newblock Descent graphs in pedigree analysis: Applications to haplotyping,
  location scores, and marker-sharing statistics.
\newblock {\em American Journal of Human Genetics}, 58(6):1323--1337, 1996.

\bibitem{Thompson1974}
E.~A. Thompson.
\newblock Gene identities and multiple relationships.
\newblock {\em Biometrics}, 30(4):pp. 667--680, 1974.

\bibitem{Thornton2007}
T.~Thornton and M.S. McPeek.
\newblock Case-control association testing with related individuals: A more
  powerful quasi-likelihood score test.
\newblock {\em American Journal of Human Genetics}, 81:321--337, 2007.

\end{thebibliography}
\bibliographystyle{plain}

\end{document}